\numberwithin{equation}{section}
\renewcommand\d{\partial}
\renewcommand\d{\partial}
\newcommand\R{\mathbb R}
\newcommand\br{\begin{remark}}
\newcommand\er{\end{remark}}
\newcommand\bp{\begin{pmatrix}}
\newcommand\ep{\end{pmatrix}}
\newcommand{\be}{\begin{equation}}
\newcommand{\ee}{\end{equation}}
\newcommand\ba{\begin{equation}\begin{aligned}}
\newcommand\ea{\end{aligned}\end{equation}}
\newcommand{\bap}{\begin{app}}
\newcommand{\eap}{\end{app}}
\newcommand{\begs}{\begin{exams}}
\newcommand{\eegs}{\end{exams}}
\newcommand{\beg}{\begin{example}}
\newcommand{\eeg}{\end{exaplem}}
\newcommand{\bpr}{\begin{proposition}}
\newcommand{\epr}{\end{proposition}}
\newcommand{\bt}{\begin{theorem}}
\newcommand{\et}{\end{theorem}}
\newcommand{\bc}{\begin{corollary}}
\newcommand{\ec}{\end{corollary}}
\newcommand{\bl}{\begin{lemma}}
\newcommand{\el}{\end{lemma}}
\newcommand{\bd}{\begin{definition}}
\newcommand{\ed}{\end{definition}}
\newcommand{\brs}{\begin{remarks}}
\newcommand{\ers}{\end{remarks}}
\newcommand{\CalF}{\mathcal{F}}
\newcommand{\N}{\mathcal{N}}
\newcommand{\Id}{{\rm Id }}
\newcommand{\Range}{{\rm Range }}
\newcommand{\suppt}{{\rm suppt }}
\newtheorem{theorem}{Theorem}[section]
\newtheorem{proposition}[theorem]{Proposition}
\newtheorem{corollary}[theorem]{Corollary}
\newtheorem{lemma}[theorem]{Lemma}
\theoremstyle{remark}
\newtheorem{remark}[theorem]{Remark}
\theoremstyle{definition}
\newtheorem{definition}[theorem]{Definition}
\newtheorem{example}[theorem]{Example}
\newcommand\cT{{\mathcal T}}
\newcommand\cS{{\mathcal S}}
\newcommand\cR{{\mathcal R}}
\newcommand\cQ{{\mathcal Q}}
\newcommand{\RM}{\mathbb{R}}
\newcommand{\ZM}{\mathbb{Z}}
\newcommand{\CM}{\mathbb{C}}
\newcommand{\NM}{\mathbb{N}}
\newcommand{\beq}{\begin{equation}}
\newcommand{\eeq}{\end{equation}}
\newcommand{\ks}{k_*}
\title{
Nonlocalized modulation of periodic reaction diffusion waves:
The Whitham equation
}
\author{ Mathew A. Johnson}
\address{University of Kansas, Lawrence, KS 66045}
\email{matjohn@math.ku.edu}
\thanks{ Research of M.J. was partially supported by an NSF Postdoctoral Fellowship under NSF grant DMS-0902192}
\author{Pascal Noble}
\address{Universit\'e Lyon I, Villeurbanne, France}
\email{noble@math.univ-lyon1.fr}
\thanks{Research of P.N. was partially supported by the French ANR Project no.
ANR-09-JCJC-0103-01}
\author{L.Miguel Rodrigues}
\address{Universit\'e Lyon 1, Villeurbanne, France}
\email{ rodrigues@math.univ-lyon1.fr}
\thanks{ Stay of M.R. in Bloomington was supported by 
French ANR project no. ANR-09-JCJC-0103-01}
\author{Kevin Zumbrun}
\address{Indiana University, Bloomington, IN 47405}
\email{kzumbrun@indiana.edu}
\thanks{Research of K.Z. was partially supported
under NSF grant no. DMS-0300487}
\begin{document}

\begin{abstract}
In a companion paper, we established
nonlinear stability with detailed diffusive rates of decay
of spectrally stable periodic traveling-wave solutions of reaction
diffusion systems under small perturbations consisting of a nonlocalized
modulation plus a localized ($L^1$)
perturbation.
Here, we determine time-asymptotic behavior under such perturbations,
showing
that solutions consist to leading order of a modulation
whose parameter evolution is governed by an associated Whitham averaged equation.
\end{abstract}
\date{\today}
\maketitle

\section{Introduction}\label{s:introduction}
In this, the second part of a two-part series of papers,
we determine the time-asymptotic behavior
of spectrally stable periodic traveling-wave solutions of reaction
diffusion systems under small perturbations consisting of a nonlocalized
modulation plus a localized
($L^1$)
perturbation, showing that solutions
consist of an (in general nonlocalized) modulation governed by the formal second-order Whitham
averaged equations plus a faster-decaying localized residual.

In the first part, \cite{JNRZ1}, we established nonlinear stability under such perturbations,
together with detailed rates of decay, using a refined version
of the argument used in \cite{JZ1} to show stability under localized
perturbations. In each of these cases, the basic approach was to
introduce a phase \emph{via} a phase-dependent change of variables, then
separate out from the linearized solution operator slowly-decaying terms
corresponding to linear phase shifts, and
estimate separately the modulational and nonmodulational parts of the solution.
While we reduced at first order the long-time dynamics of the solution to
that of phase shifts, no information about the behavior of the phase was given besides decay rates.

Here, we show that, using the same basic linear estimates as in \cite{JNRZ1}, but a one-order-higher decomposition of the solution operator, we may
describe at a higher precision the asymptotic behavior in terms of a full modulation instead of just a phase shift.
This enables us, by a further linear estimate in the spirit of \cite{JNRZ1},
to prove that the principal part of the local wave number appearing in this
modulation obeys a convected Burgers equation that is
asymptotically equivalent
to a solution of an associated
{\it Whitham modulation equation} of a type derived formally
in, e.g., \cite{S1,DSSS,W}.

This not only gives rigorous validation of the formal Whitham approximation
in the strong sense of showing that it describes time-asymptotic behavior,
but, through the explicit prescription of initial data coming from our analysis,
also gives new information of predictive value not available from the formal
asymptotic derivation. Indeed, as discussed in \cite{BJNRZ}, the connection between
initial perturbation and initial values for the Whitham equations has remained for a long time somewhat mysterious. As a side-consequence, we show that the decay rates of \cite{JNRZ1} are sharp.

\medskip
We recall first the nonlinear stability result of \cite{JNRZ1}.
Consider a periodic traveling-wave solution
$u(x,t)=\bar u(\ks(x-ct))$
of reaction diffusion
system
$u_t=u_{xx} + f(u)$, or, equivalently, a standing-wave solution
$u(x,t)=\bar u(x)$ of
\be\label{rd}
\ks {u}_t=\ks^2{u}_{xx}+f({u})
-\omega_0
{u}_x,
\ee
where $\omega_0(k_*):=-k_* c(k_*)$ is the temporal frequency,
$c$ is the speed of the original traveling wave,
and wave number $k_*$ is chosen so that
\be\label{per}
\bar u(x+1)=\bar u(x).
\ee
Here and throughout the paper, all periodic functions are assumed to be
periodic of period one.

We make the following standard genericity assumptions:
\begin{enumerate}
  \item[(H1)] $f\in C^K(\RM^n)$,
($K\ge3$).
  \item[(H2)]  Up to translation, the set of $1$-periodic solutions of \eqref{rd} (with $k$ replacing $\ks$) in the vicinity of $\bar{u}$, $k=\ks$,
forms a smooth $1$-dimensional manifold
$\{\bar{u}(\cdot;k)\}=\{\bar{u}^k(\cdot)\}$, $c=c(k)$.
\end{enumerate}

Linearizing \eqref{rd} about $\bar u$
yields the periodic coefficient
equation
\be\label{lin}
\ks v_t=\ks Lv:= (\ks^2\partial_x^2
-\omega_0(\ks)
\partial_x +b)v,
\qquad
b(x):=df(\bar u(x)).
\ee
Introducing the one-parameter family of closed Floquet operators
\be\label{Lxi}
\ks L_\xi:= e^{-i\xi x}\ks L e^{i\xi x}=
\ks^2(\partial_x +i\xi)^2
-\omega_0(\ks)
(\partial_x+i\xi) + b
\ee
acting on $L^2_{\rm per}([0,1])$ with densely defined domains $H^2_{\rm per}([0,1])$,
determined by the defining relation
\be\label{defrel}
L (e^{i\xi x}g)= e^{i\xi x} (L_\xi g),
\ee
we define following \cite{S1,S2}
the standard {\it diffusive spectral stability} conditions:
\begin{enumerate}
  \item[(D1)]
$\sigma(L)\subset \{\lambda\ |\ \Re \lambda<0 \}\cup\{0\}$.
  \item[(D2)]
There exists a constant $\theta>0$ such that $\sigma(L_{\xi})\subset\{\lambda\ |\ \Re \lambda\leq-\theta|\xi|^2\}$
for each $\xi\in[-\pi,\pi)$.
  \item[(D3)]
$\lambda=0$ is a simple eigenvalue of $L_0$.\footnote{
$L_0$ has always at least the translational
zero-eigenfunction $\bar u'$.}
\end{enumerate}
Notice, in (D1) above, we consider $L$ as a closed operator on $L^2(\RM)$ with densely defined domain $H^2(\RM)$.

The following stability result was established in \cite{JNRZ1},
generalizing results of \cite{S1,S2,JZ1}.
Here, and throughout the paper,
given two real valued functions $A$ and $B$, we say
that $A\lesssim B$,
or that for every $x\in\textrm{dom}(A)\cap\textrm{dom}(B)$, $A(x)\lesssim B(x)$,
if there exists a constant $C>0$ such that $A(x)\leq CB(x)$ for each $x\in\textrm{dom}(A)\cap\textrm{dom}(B)$.
Even in a chain of inequalities, we will also feel free to denote by $C$ harmless constants with different values.

\begin{proposition}[\cite{JNRZ1}]\label{oldmain}
Let $K\ge 3$.
Assuming (H1)-(H2) and (D1)-(D3),
let
$$
E_0:=\big\|\tilde u_0(\cdot-h_0(\cdot))-\bar u(\cdot)\big\|_{L^1(\RM)\cap H^K(\RM)}
+\big\|\partial_x h_0\big\|_{L^1(\RM)\cap H^K(\RM)}
$$
be sufficiently small, for some choice of
phase modulation $h_0$
such that $h_0(-\infty)=-h_0(\infty)$.\footnote{
This normalization may be achieved without loss of generality
by a shift in $\bar u$; see Remark \ref{phase_wavenumber_rmk}.}
Then, there exists a global solution $\tilde u(x,t)$ of \eqref{rd}
with initial data $\tilde u_0$ and a phase function
$\psi(x,t)$ such that, for $t>0$ and $2\le p \le \infty$,
\ba\label{mainest}
\big\|\tilde u(\cdot-\psi(\cdot,t), t)-\bar u(\cdot)\big\|_{L^p(\RM)},
\quad \big\|\nabla_{x,t} \psi(\cdot,t) \big\|_{W^{K+1,p}(\RM)}
&\lesssim E_0 (1+t)^{-\frac{1}{2}(1-1/p)},\\
\big\|\tilde u(\cdot-\psi(\cdot,t), t)-\bar u(\cdot)\big\|_{H^K(\RM)}
&\lesssim E_0 (1+t)^{-\frac{1}{4}},\\
\ea
and
\ba\label{andpsi}
\big\|\tilde u(\cdot , t)-\bar u(\cdot)\big\|_{L^\infty(\RM)}, \quad
\big\| \psi(\cdot,t) \big\|_{L^\infty(\RM)} &\lesssim E_0.
\ea
In particular, $\bar u$ is nonlinearly
(boundedly) stable in $L^\infty(\RM)$
with respect to initial perturbations
$v_0=\tilde u_0-\bar u$
for which $\|v_0\|_{E}:=\inf_{\partial_x h_0\in L^1(\RM) \cap H^K(\RM)} E_0$ is sufficiently
small.
\end{proposition}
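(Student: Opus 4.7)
The plan is to extend the Bloch-wave modulational framework of \cite{S1,S2,JZ1} (which treats localized perturbations) to allow a nonlocalized initial phase $h_0$ satisfying only $\partial_x h_0 \in L^1 \cap H^K$. The core idea is to introduce an unknown phase function $\psi(x,t)$ that absorbs the slowly decaying translational mode of the linearized problem and to study the residual $v(x,t) := \tilde u(x,t) - \bar u(x - \psi(x,t))$, whose governing equation can be arranged so that its nonlinearity is (i) at least quadratic in the small variables and (ii) independent of undifferentiated $\psi$.

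First, I would carry out a low-frequency spectral analysis of the Floquet family $L_\xi$. Assumptions (D1)--(D2) together with standard analytic perturbation theory yield, for $|\xi|$ small, a simple eigenvalue $\lambda(\xi) = -i a \xi - d\xi^2 + O(\xi^3)$ with $\Re d > 0$ and a spectral projector $\Pi(\xi)$ onto the critical mode; extended by a smooth cutoff, the complementary spectrum lies in $\{\Re \lambda \le -\eta\}$ for some $\eta > 0$. I would then decompose the semigroup via the Bloch representation
\[
e^{Lt} v(x) = \int_{-\pi}^{\pi} e^{i\xi x}\, e^{L_\xi t}\, \hat v(\xi,x)\, d\xi
\]
into a modulational piece $S_p(t)$ built from $\Pi(\xi) e^{\lambda(\xi) t}$ and a faster-decaying remainder $\tilde S(t)$, and establish $L^1 \cap H^K \to L^p$ estimates of the form $\|S_p(t)\|_{L^p} \lesssim (1+t)^{-\frac12(1-1/p)}$ with a strictly better rate for $\tilde S(t)$, via Hausdorff--Young applied to the Bloch integral combined with the Taylor expansion of $e^{L_\xi t}$ near $\xi = 0$.

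Second, I would set up the nonlinear iteration. Writing $\tilde u = \bar u(\cdot - \psi) + v$ and defining $\psi$ implicitly through a Duhamel identity of the form
\[
\psi(x,t) = S_p(t)\bigl(h_0\, \bar u'\bigr)(x) + \int_0^t S_p(t-s)\, \mathcal N_\psi(v,\psi_x,\psi_t)(x,s)\, ds,
\]
one finds that $v$ satisfies $v = \tilde S(t) v_0 + \int_0^t \tilde S(t-s)\, \mathcal N(v,\psi_x,\psi_t)\, ds$, with $\mathcal N$ a smooth nonlinearity that is at least quadratic in $(v,\psi_x,\psi_t)$ and free of undifferentiated $\psi$. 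Setting
\[
\zeta(T) := \sup_{0 \le t \le T}\ \sum_{p \in \{2,\infty\}} (1+t)^{\frac12(1-1/p)} \bigl(\|v(\cdot,t)\|_{L^p} + \|\nabla_{x,t}\psi(\cdot,t)\|_{L^p}\bigr),
\]
one plugs the linear bounds above into these Duhamel representations to obtain $\zeta(T) \lesssim E_0 + \zeta(T)^2$; combined with local well-posedness, continuous induction closes the argument and yields \eqref{mainest}. The $L^\infty$ bounds \eqref{andpsi} then follow by integrating the pointwise decay of $\psi_t$ in time and using boundedness of $h_0$ coming from $\partial_x h_0 \in L^1(\RR)$.

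The main obstacle, in my view, is the nonlocalized nature of $h_0$: since $h_0$, and hence $\psi$, need not lie in any $L^p$ with $p < \infty$, one cannot control $\psi$ directly through the linear semigroup. This forces the entire nonlinear analysis to be phrased in terms of the differentiated quantities $\psi_x, \psi_t, v$, and requires a careful algebraic arrangement of the perturbation equation so that the nonlinearity genuinely contains no undifferentiated $\psi$ — a cancellation that is visible at the linear Bloch level in the modulational projector and must be matched term by term at the nonlinear level. Reconciling this structural cancellation with the limited regularity $f \in C^K$, $K \ge 3$, and with the multi-norm scheme encoded in $\zeta(T)$, is the technical heart of the argument.
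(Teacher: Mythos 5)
A preliminary remark: this paper does not prove Proposition~\ref{oldmain} at all --- it is imported verbatim from the companion paper \cite{JNRZ1} --- so your proposal can only be compared with the strategy that the present paper recalls (the splitting $S(t)=\bar u's^p(t)+\tilde S(t)$, the integral system \eqref{closed}(ii)/\eqref{oldnl}, and the nonlinearity \eqref{N1}--\eqref{eqn:T}). Measured against that, your outline is the right one in its essentials: Bloch decomposition, separation of the critical mode $\lambda(\xi)$, implicit Duhamel definition of $\psi$ through the modulational part of the solution operator, and a weighted-norm iteration closing on differentiated quantities only. Two points, however, are genuinely off. First, your perturbation variable $v:=\tilde u(x,t)-\bar u(x-\psi(x,t))$ is not the one that makes the scheme work; the paper's convention \eqref{pertvar} is $v(x,t)=\tilde u(x-\psi(x,t),t)-\bar u(x)$, i.e.\ one untwists the \emph{solution}, not the background. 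This is what keeps the linear part equal to the fixed periodic-coefficient operator $L$ (so that the Bloch/Floquet estimates apply verbatim) and what produces a nonlinearity that is quadratic, free of undifferentiated $\psi$, and of the derivative form \eqref{N1}; with your convention the linearization sits on a modulated, non-periodic background and the whole linear machinery would have to be rebuilt.

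Second, and more concretely, your derivation of the bound $\|\psi(\cdot,t)\|_{L^\infty}\lesssim E_0$ in \eqref{andpsi} ``by integrating the pointwise decay of $\psi_t$ in time'' fails: by \eqref{mainest}(ii) one only has $\|\psi_t(\cdot,t)\|_{L^\infty}\lesssim E_0(1+t)^{-1/2}$, which is not integrable, so this argument yields at best $\|\psi(\cdot,t)\|_{L^\infty}\lesssim E_0(1+t)^{1/2}$. The correct route is to estimate $\psi$ directly from its Duhamel representation \eqref{closed}(ii), using the borderline case $l=m=0$, $p=\infty$ of \eqref{Spmod}, namely $\|s^p(t)(h_0\bar u')\|_{L^\infty}\lesssim\|\partial_x h_0\|_{L^1}$ (this is exactly why that exceptional case is singled out in Proposition~\ref{modprop}), together with $\int_0^t(1+t-s)^{-1/2}\|\N(s)\|_{L^1}\,ds=O(1)$. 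Two smaller gaps of the same kind: your norm $\zeta(T)$ omits the Sobolev components ($\|v\|_{H^K}$, $\|(\psi_t,\psi_x)\|_{W^{K+1,p}}$, cf.\ \eqref{known}) that are needed to control the quasilinear terms in $\N$ and to absorb derivative losses, and your implicit definition of $\psi$ does not satisfy $\psi|_{t=0}=h_0$ (since $s^p(0)\ne\Id$ on the relevant data), which is why the short-time cutoff $\chi(t)$ appears in \eqref{closed}; without it the constructed $\tilde u$ does not have the prescribed initial data $\tilde u_0$.
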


\medskip
Recall now the formal, Whitham equation, as derived in various contexts
and to varying degrees of accuracy in \cite{W,HK,Se,DSSS,NR1,NR2}.
By translation-invariance of the underling equations,
\be\label{trans}
L_0\bar u'=0,
\ee
so that by (D3) the zero-eigenspace of $L_0$ is exactly $\Range\{\bar u'\}$.
Denote by $\bar u^{ad}$ the left, or adjoint, zero eigenfunction
of $L_0$.
Fixing $\bar u$, $k_*$, introduce the parametrization
\be\label{param}
\bar u^k(x-\beta)=\bar u(k,x-\beta)
\ee
of nearby periodic traveling waves, i.e., $1$-periodic solutions of
$
\omega_0(k)\bar u^k_x-k^2\bar u^k_{xx}-f(\bar u^k)=0,
$
$\omega_0(k)=-kc(k)$, for definiteness chosen in such a way that
\be\label{gauge}
\langle \bar u^{ad}(k),\d_k\bar u(k)\rangle_{L^2([0,1])}=0.
\ee
(This may be achieved by an appropriate translation, by the fact that
$\langle \bar u^{ad},\bar u'\rangle_{L^2([0,1])} \ne 0$.)
Then, the formal approximate solution of $u_t-u_{xx}-f(u)=0$ obtained by a nonlinear WKB expansion is
\be\label{wref}
u(x,t)\approx
\bar u^{\kappa(x,t)}(\Psi(x,t)),
\ee
where the wave number
$\kappa:=\Psi_x$
satisfies the Whitham equation
(viscous scalar conservation law)
\be\label{whitham}
\kappa_t-(\omega_0(\kappa))_x=(d(\kappa)\kappa_x)_x
\ee
or equivalently, the phase $\Psi$
satisfies its integral (viscous Hamilton--Jacobi equation)
\be\label{hj}
\Psi_t-\omega_0(\Psi_x)=d(\Psi_x)\Psi_{xx}
\ee
where, taking \eqref{gauge} into account,
$
d(k)\ =\ 1+2k\langle u^{ad}(k),\d_k\bar u'(k)\rangle_{L^2([0,1])} ,
$
and $\omega_0(k)=-c(k)k$ is the nonlinear dispersion relation
determined by the manifold of periodic traveling-wave solutions $\bar u^k(k(x-ct))$ described in (H2).
See \cite{NR1,NR2} for a detailed derivation of this kind of nonlinear Whitham's equation in the context of the Saint-Venant and Kuramoto-Sivashinsky equations
when the modulation procedure yields a system rather than an equation.

In our context, we look for the evolution of a localized
perturbation
$k=\ks h_x$ in our co-moving frame,
and so it is enough to retain
the quadratic order approximants
\be\label{mainwhitham}
\ks k_t + \ks q(k)_x= \ks^2d(\ks) k_{xx},
\ee
and
\be\label{mainhj}
\ks h_t+q(\ks h_x)=\ks^2 d(\ks) h_{xx}
\ee
with
$
q(k)=-(\omega_0'(\ks)+c(\ks))k-\frac12 \omega_0''(\ks)k^2\ .
$

For, as is well known, \eqref{whitham} and \eqref{mainwhitham}
are ``asymptotically equivalent''
for such data, in the sense that the difference
between solutions of \eqref{whitham} and \eqref{mainwhitham}
decays faster in all $L^p(\RM)$, $1\le p\le \infty$,
than does the solution itself, which, for an initial perturbation
with nonzero integral, decays at the rate of a heat kernel.
See \cite{DSSS} for a direct derivation of the quadratic Whitham's equation.

The notion of asymptotic equivalence is quantified in the following result.

\begin{lemma}
\label{cl_lemma}
Let $\eta>0$ be arbitrary.
Let $\kappa$ be a solution of \eqref{whitham} with initial datum $\kappa_0$ and $k$ be a solution
of \eqref{mainwhitham} with initial datum $k_0=\kappa_0(\,\cdot\,/\ks)-\ks$,
$E_0:=\|k_0\|_{L^1(\RM)\cap H^3(\RM)}$ sufficiently small.
Then, setting $\tilde\kappa(x,t)=\ks+k(\ks(x-c(\ks)t),\ks t)$,
$$
\big\|(\kappa-\tilde\kappa)(t)\big\|_{L^p(\RM)}\lesssim
E_0^2(1+t)^{-\frac{1}{2}(1-1/p)-\frac12+\eta},
\qquad 1\le p\le \infty;
$$
moreover, if $m_0:=\int_\RM k_0\ne 0$ and $E_1:=E_0+\||\cdot|\,k_0\|_{L^1(\RM)}$ is sufficiently small, then
\be\label{heatconv}
\big\|k(t)-\phi(\cdot,1+t)\big\|_{L^p(\RM)},\quad
\big\|(\kappa-\tilde\kappa)(t)\big\|_{L^p(\RM)}\lesssim
E_0^2(1+t)^{-\frac{1}{2}(1-1/p)-\frac12+\eta},
\qquad 1\le p\le \infty;
\ee
where
\be\label{diff}
\phi(x,t)\ =\ \frac{1}{\sqrt{t}}\ \bar \phi\left(\frac{x+\omega_0'(\ks)t}{\sqrt{t}}\right)
\ee
is the unique
self-similar\footnote{More exactly, self-similar in a frame moving with linear group velocity.}
solution of
\eqref{mainwhitham} determined by $\int \bar \phi=\int k_0$;
in particular,
$
\|\kappa(t)-\ks\|_{L^p(\RM)},\ \|\tilde \kappa(t)-\ks\|_{L^p(\RM)}\gtrsim |m_0|\,(1+t)^{-\frac{1}{2}(1-1/p)} .
$
\end{lemma}

\begin{proof}
See Appendix \ref{cl_proof}.
\end{proof}

Our main result is as follows.

\begin{theorem}\label{main}
Let $\eta>0$ and $K\geq3$.
Under the assumptions of Proposition \ref{oldmain},
let $k$ and $h$ satisfy
the quadratic approximants \eqref{mainwhitham} and \eqref{mainhj} of
the second-order Whitham modulation equations \eqref{whitham} and \eqref{hj} with initial data $k|_{t=0}=\ks\partial_x h_0$, $h|_{t=0}=h_0$,
and let $\psi$ be the phase prescribed in the proof
of Proposition \ref{oldmain} in \cite{JNRZ1} (see \eqref{psidef} below).
Then, for $t>0$,
$2\le p\le\infty$,
\ba\label{refinedest}
\|\tilde u(\cdot- \psi(\cdot,t), t)-\bar u^{\ks(1+ \psi_x(\cdot,t))}(\,\cdot\,)\|_{L^p(\RM)}
&\lesssim E_0
\ln(2+t)\
 (1+t)^{-\frac{3}{4}},\\
\|\ks\partial_x \psi(t)-k(t) \|_{L^p(\RM)}
&\lesssim
E_0 (1+t)^{-\frac{1}{2}(1-1/p)-\frac12+\eta},\\
\|\psi(t) -h(t) \|_{L^p(\RM)}
&\lesssim E_0 (1+t)^{-\frac{1}{2}(1-1/p)+\eta}.\\
\ea
\end{theorem}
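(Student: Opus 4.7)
The plan is to refine the argument of \cite{JNRZ1} at the level of the linear semigroup decomposition, isolating from $e^{Lt}$ not only the leading modulational phase shift $\phi(x,t)\bar u'(x)$ but also the next-order wavenumber correction, so that the residual after subtracting the full modulated profile $\bar u^{\ks(1+\psi_x)}$ decays strictly faster than the $(1+t)^{-\frac12(1-1/p)}$ rate of Proposition \ref{oldmain}. Concretely, I would first set up, as in \cite{JNRZ1}, the nonlinear perturbation equations for the pair $(v,\psi)$ via the ansatz $\tilde u(x-\psi(x,t),t)=\bar u(x)+v(x,t)$, so that $v$ solves a quasilinear equation $\ks v_t-\ks Lv=\mathcal N(v,\psi)$ with $\mathcal N$ at least quadratic in $(v,\psi_x,\psi_t)$, supplemented by an implicit Duhamel-type relation for $\psi$ that uses the gauge freedom to cancel the slowest modes of the Green function of $e^{Lt}$.

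The central new ingredient is a Bloch-wave Taylor expansion of the Green kernel about the Floquet frequency $\xi=0$ carried to one higher order than in \cite{JNRZ1}. Under (D1)-(D2) the operator $L_\xi$ has, for small $\xi$, a simple smooth eigenvalue $\lambda(\xi)=-i(\omega'(\ks)+c(\ks))\xi/\ks -d(\ks)\xi^2+O(\xi^3)$ matching the coefficients in \eqref{flux} and \eqref{mainwhitham}, with analytic eigenfunction $\Phi(\xi,\cdot)$ satisfying $\Phi(0,\cdot)=\bar u'$ and $\d_\xi\Phi(0,\cdot)\propto\d_k\bar u(\ks,\cdot)$ thanks to the normalization \eqref{gauge}. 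Expanding both $\lambda$ and $\Phi$ to second order and inserting into the inverse Bloch transform, I would split the Green kernel into two slow pieces $\phi(x,t)\bar u'(x)$ and $\ks\phi_x(x,t)\,\d_k\bar u(\ks,x)$ (with $\phi$ a convected heat-kernel-type scalar) plus a remainder $\tilde G$ that is $(1+t)^{-1/2}$ sharper in every $L^p$ than in \cite{JNRZ1}. Feeding this refinement into the Duhamel loop with the same nonlinear bookkeeping as in \cite{JNRZ1} yields
\begin{equation*}
\|v(\cdot,t)-\ks\psi_x(\cdot,t)\,\d_k\bar u(\ks,\cdot)\|_{L^p(\RM)}\lesssim E_0\,\ln(2+t)\,(1+t)^{-3/4},
\end{equation*}
the logarithm coming from the borderline integration of the quadratic nonlinearity at this faster rate; combined with the Taylor expansion $\bar u^{\ks(1+\psi_x)}(x)=\bar u(x)+\ks\psi_x\,\d_k\bar u(\ks,x)+O(\psi_x^2)$ and the $L^\infty$ smallness of $\psi_x$ from Proposition \ref{oldmain}, this converts the linear correction into the full modulated profile, yielding the first inequality of \eqref{refinedest}.

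For the remaining two inequalities, I would derive from the gauge identity for $\psi$ a closed implicit equation $\ks\psi_t+q(\ks\psi_x)-\ks^2 d(\ks)\psi_{xx}=\mathcal R(v,\psi)$, with $q$ as in \eqref{flux} and $\mathcal R$ a quadratic residual whose space-time $L^p$ decay is already controlled by the analysis of $v$. Subtracting the equation \eqref{mainhj} for $h$, writing a Duhamel formula for $\psi-h$ against the constant-coefficient parabolic semigroup generated by $\ks\,\d_t-\ks^2 d(\ks)\,\d_x^2$, and invoking standard heat-kernel estimates together with Lemma \ref{cl_lemma} to identify $\ks h_x$ with $k$, gives the third inequality and, after differentiation and a Gronwall-type accommodation of the borderline Burgers term $\tfrac12\omega''(\ks)(\psi_x^2-h_x^2)$, also the second; the $\eta$-losses arise precisely from this borderline interpolation against the $H^K$ bounds of Proposition \ref{oldmain}.

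The hard part will be the refined Bloch expansion: one must extract exactly the geometric combination of spectral derivatives so that $\d_k\bar u(\ks,\cdot)$ and the Whitham coefficients $\omega'(\ks)+c(\ks)$ and $d(\ks)$ emerge as stated, and one must control uniformly in $\xi$ the faster-decaying remainder $\tilde G$, including contributions at $|\xi|=1/2$ and from spectral curves other than the critical one. Once this decomposition is secured, the nonlinear closure becomes a systematic but lengthy Duhamel argument in the same spirit as \cite{JNRZ1}.
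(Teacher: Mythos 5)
Your proposal follows essentially the same route as the paper: the one-order-higher Bloch expansion of the solution operator (replacing $\phi(\xi)-\phi(0)$ by $\phi(\xi)-\phi(0)-\xi\partial_\xi\phi(0)$, so that the extracted principal part becomes $(\bar u'+\ks\partial_k\bar u\,\partial_x)s^p(t)$ and the remainder gains a factor $(1+t)^{-1/2}$) yields exactly the paper's bound $\|v-\ks\psi_x\partial_k\bar u\|_{L^p}\lesssim E_0\ln(2+t)(1+t)^{-3/4}$, and the comparison of $\ks\psi_x$ with $k$ is closed by the same Duhamel-against-convected-heat argument with a weighted-norm iteration and $\eta$-loss. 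The only notable presentational difference is that the paper never derives a closed PDE for $\psi$: it compares the integral representation of $\ks\psi_x$ through $s^p$ directly with the Duhamel formula for $k$ through $\sigma$, which requires the auxiliary estimates $(\partial_t-a\partial_x)s^p=O((1+t)^{-1})$ and $\|s^p(t)(h_0f)-\langle\tilde\phi(0),f\rangle\sigma(t)h_0\|$, and identifies the quadratic coefficient $\langle\tilde\phi(0),f^p\rangle=\tfrac12\omega''(\ks)$ by explicitly computing the effective nonlinearity and differentiating the profile equation twice in $k$ --- the step you correctly single out as the delicate one.
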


\br\label{phase_wavenumber_rmk}
\textup{
Thanks to translation invariance there is no loss in generality in assuming $h_0$ is centered, that is $h_0(-\infty)=-h_0(\infty)$, both in Proposition~\ref{oldmain} and Theorem~\ref{main}. Accordingly, although we will not repeat it explicitly, all (non-localized) functions handled in various lemmas and propositions will be assumed centered. Once this normalization is made, one may recover local phase from local wave number as soon as needed using $\d_x\Psi=\kappa$. It is our will to enforce this important relation that leads us to normalize 
the parametrization according to \eqref{gauge}.
}
\er

\br\label{altconv}
\textup{
From estimates \eqref{refinedest}(i) on $\tilde u$ and
\eqref{mainest}(ii) and \eqref{andpsi} on $\psi_x$ and $\psi$,
we obtain
$$
\|\tilde u(\cdot,t)-\bar u^{\ks(1+\psi_x(\cdot,t))}(\tilde\Psi(\cdot,t))\|_{L^p(\RM)}
\lesssim E_0\ln(2+t)\ (1+t)^{-\frac{3}{4}},\qquad 2\le p\le\infty,
$$
where $\tilde \Psi(\cdot,t)$ is the inverse of $y\mapsto X(y,t):=y-\psi(y,t)$. We insure the existence of such a map by keeping, for any $t$, $\|\psi(t)\|_{L^\infty(\RM)}$ bounded and $\|\psi_x(t)\|_{L^\infty(\RM)}$ small.
Since $\tilde\Psi(x,t)- (x+\psi(x,t))
=\psi(\tilde\Psi(x,t),t)-\psi(\tilde\Psi(x,t)-\psi(\tilde\Psi(x,t),t),t)$
one may translate this into a bound
\be\label{preconv}
\|\tilde u(\cdot,t)-\bar u^{\ks\tilde\Psi_x(\cdot,t)}(\tilde\Psi(\cdot,t))\|_{L^p(\RM)}
\lesssim E_0\ln(2+t)\ (1+t)^{-\frac{3}{4}},\qquad 2\le p\le\infty,
\ee
of the form \eqref{wref},
or degrade it into\footnote{We use here
$\|\tilde\Psi(\cdot,t)-(\,\cdot\,+\psi(\cdot,t))\|_{L^p(\RM)}\lesssim \|\psi(\cdot,t)\|_{L^\infty(\RM)}\|\psi_x(\cdot,t)\|_{L^p(\RM)}$; see \cite{JNRZ2}.}
$
\|\tilde u(\cdot,t)-\bar u^{\ks(1+\psi_x(\cdot,t))}(\cdot+\psi(\cdot,t))\|_{L^p(\RM)}
\lesssim E_0(1+t)^{-\frac12(1-1/p)},
$
obtaining thereby, for $\eta>0$ arbitrary,
and $2\le p\le\infty,$
the exact Whitham comparison
\be\label{ver}
\begin{array}{rcl}
\|\tilde u(\cdot,t)-\bar u^{\ks+k(\cdot,t)}(\cdot+h(\cdot,t))\|_{L^p(\RM)}
\lesssim E_0 (1+t)^{-\frac12(1-1/p)+\eta}.
\end{array}
\ee
Here, we could as well write
\be\label{SSSUversion}
\|\tilde u(\cdot,t)-\bar u(\cdot+h(\cdot,t))\|_{L^p(\RM)}
\lesssim E_0 (1+t)^{-\frac12(1-1/p)+\eta}
\ee
similarly as in \eqref{mainest}, since $k$ is negligible at this
order of approximation.
}
\er

From Theorem \ref{main} and Lemma \ref{cl_lemma}, we see immediately
that the decay rates of Proposition \ref{oldmain} are sharp.
At the same time, we give rigorous validation of the Whitham equation
in two ways.
The first, more obvious way, is to show through \eqref{ver} that asymptotic
behavior consists of modulation by a solution $h$ of the exact Whitham
equations.
Here, as pointed out in Remark \ref{altconv},
modulation in wave number is asymptotically
irrelevant, and only phase shift plays a role.
The second, less direct, but ultimately sharper (by factor
$(1+t)^{-1/4}$ to
$(1+t)^{-1/2}$ in rate of decay) way, is to show through
\eqref{preconv} that {\it a more accurate description of asymptotic behavior
is modulation including both phase shift and variation in wave number}
by a solution $\tilde\Psi$ of an approximate Whitham equation with rapidly
decaying error term (term $\tilde r(t)$ of \eqref{repeat}, together with
terms of similar order coming from the difference $\tilde \Psi_x-
\Psi_x=O(\psi_x^2)$; see Remark \ref{lastrmk}).
See \cite{DSSS} for validation of the Whitham equation in the
altenative
sense of building a family of solutions existing on asymptotically-large but bounded intervals and close to a given asymptotic expansion involving a given solution of the Whitham equation.

\br\label{c0}
\textup{
Similarly as in \cite{JZ1}, all of our analysis goes
through in the general quasilinear $2r$-order parabolic case;
in particular, our results extend to the
(sectorial) Swift--Hohenberg equations treated for localized
perturbations in \cite{S2}.
See \cite{BJNRZ}, Appendix B, for related analysis.
}
\er

\br\label{conjecture}
\textup{
We conjecture that \eqref{refinedest}(i) and thus also
of \eqref{preconv} may be improved for $p>2$ to
$$
\big\|\tilde u(\cdot- \psi(\cdot,t), t)-\bar u^{\ks(1+ \psi_x(\cdot,t))}(\,\cdot\,)\big\|_{L^p(\RM)}
\lesssim E_0
\ln(2+t)\
 (1+t)^{-\frac{1}{2}(1-1/p)-\frac12}
$$
by the use of additional,
$W^{k,p}\to L^p$ estimates outside the scope of this paper;
see Remark \ref{highnorm}.
}
\er

\subsection{Discussion and open problems}\label{discussion}

The Whitham modulation equations, and WKB approximations in general,
are ``magical'' prescriptions of great predictive value
that may be obtained by formal, ``consistency-type,'' considerations
suppressing the sometimes very complicated details of the underlying equations.
As a consequence, they often hide the mechanisms leading to the behavior
they predict.
In particular, rigorous verification of such asymptotic
expansions typically comes from techniques apart from the method of derivation
of the Whitham equation.
Moreover, in the course of verification,
these other techniques may often give additional information
not found in the formal asymptotics.

The present case is no exception.
The Whitham modulation equations, based on slowly varying perturbations,
can be related rather directly at the linearized, spectral level,
to perturbation expansions of small Floquet-number/eigenvalue modes;
see, in various contexts, \cite{DSSS,NR1,NR2}.
Moreover, the same type of critical modes expansion, followed carefully,
leads to the linearized estimates of \cite{JZ1}.
However, to carry this analysis to the nonlinear level involves significant
technical challenges, and appears to require more indirect methods,
motivated by, but at a technical level quite
different from, the formal Whitham modulation.

In \cite{JNRZ1} and the present paper, we have broken the nonlinear analysis
into two distinct pieces, focused respectively on {\it decay} and
{\it asymptotic behavior}.
In \cite{JNRZ1}, by a judiciously chosen nonlinear transformation involving
an implicitly prescribed shift in phase, we showed how to convert rigorously
the picture of behavior afforded by formal asymptotics to a system of integral
equations exhibiting the expected nonlinear decay.
At a technical level, this could be understood as identifying the
main part of the linearized solution operator as a linearized phase shift, and
separating off this part of the behavior by a counterbalancing
nonlinear change in phase.
Put most naively, the analysis is driven by the observation that
the critical mode given by the
kernel of the linearized Bloch operator $L_0$ about the wave
at Bloch frequency $\xi=0$ is
\be\label{naive}
\phi(0,\cdot)=\bar u',
\ee
where $\bar u'$ represents instantaneous translation.

The above analysis gives a simple and self-contained
argument yielding sharp rates of decay to a phase
modulation of the wave.
In the present analysis, we show how to extract from the
system of integral equations derived in \cite{JNRZ1}
an approximate differential equation governing the phase,
and to connect this equation to the formally predicted Whitham equation.
The first step is to go one step further in the decomposition of
the solution,
Taylor expanding the critical mode $\phi(\xi,x)$ of $L_\xi$
about $\xi=0$, and observing
that, under an appropriate normalization (see \eqref{gauge} and \eqref{gauge_eig}),
the first order corrector is
\be\label{naive2}
\partial_\xi \phi(0,\cdot)=i\ks\partial_k u(\cdot;\ks)
\ee
corresponding at linear level to modulation in the wave number $k$;
see Lemma \ref{spectral_connection} below. As often happens for higher-order correctors, the corresponding nonlinear correction can be made in simple, {\it linear} fashion
(see \eqref{pertvar}), since this term is fast enough decaying that nonlinear effects are negligible.
This yields a new residual $z$ decaying at rate \eqref{sharpest}
faster by factor
$(1+t)^{-1/4}$ to $(1+t)^{-1/2}$ than that of the residual $v$ of
\cite{JNRZ1}.
Once this is done, we may, isolating explicit terms $k$
in the $\partial_k\bar u$ direction, discarding as asymptotically
negligible all terms in the integral equations
that are not linear or quadratic in $k$,
and evaluating the resulting
quadratic interaction coefficients, 
obtain
an integral equation that is recognizable 
as the Duhamel (variation of constants) representation of a forced Burgers
equation corresponding to \eqref{mainwhitham}.
From this description and our previous bounds on the residual, we then
readily obtain the convergence result of our main Theorem \ref{main}.

Taken together, these two analyses give a blueprint
for connecting formal asymptotics to the integral equations
natural for analysis.
This should carry over to other interesting settings; indeed, we have
already carried out in \cite{JNRZ2} the extension to the much more complicated
conservation law case.
We note that, despite its difficulty, the analysis in the end, both
for decay and for convergence to the Whithams approximant, is quite short
and easily verified.
In particular, there is
little advance preparation by formal asymptotics, different from
the approach set out in \cite{DSSS}.

We note also that our approach gives ``direct'' access to bounds,
i.e., we ``solve'' rather than ``impose.''
As noted earlier, we obtain as a result somewhat sharper bounds \eqref{preconv}
than what is available from comparison to the exact Whitham equation,
a result that would at least not be easily derived by starting from the
exact Whitham solution to begin with.
That is, the system of integral equations we derive contains
{\it more} information than the formal Whitham approximation.

One interesting problem for future investigation
is the rigorous verification of the
spectral stability conditions in interesting situations
either by analysis or numerical proof.
Another, very interesting direction,
is the derivation of pointwise estimates on nonlocalized
perturbations similarly as has been done for localized solutions
in \cite{J}.

A simplifying aspect of the present analysis is that we were able to
separate the analyses of decay and behavior.
However, there could be an advantage in combining these, in that Burgers
equation is known to decay even for large perturbations, and, thanks to
the maximum principle, decays in $L^\infty$.
Thus, one might hope by such a simultaneous argument structure to treat the
case that $\psi_x$ is large but bounded in $L^1$ but only small in $L^\infty$.
This would be a very interesting extension to carry out.
Finally, as described in \cite{DSSS}, nonlinear stability of
shock-type solutions, for which not only $\psi$ but $k$ approaches different
endstates, is a challenging and very interesting open problem, which
appears to require substantial new ideas beyond those introduced here;
in particular, perturbation around such shock-type solutions and not
a background periodic wave.

\medskip

{\bf Note:}
Similar results have been obtained by different means
by Sandstede and collaborators \cite{SSSU}
using a nonlinear decomposition of phase and amplitude variables
combined with a renormalization iteration process as in \cite{S1,S2}.
Specifically, they obtain the bound \eqref{SSSUversion}
assuming the somewhat stronger localization of initial perturbations of
order $\sim |x|^{-5/2}$ as compared
to the order $\sim |x|^{-1}$ assumed here; see footnote 7, \cite{JNRZ1}.
Our bound \eqref{preconv}, though not of the same form,
gives more precise information by factor
$(1+t)^{-1/4}$ to $(1+t)^{-1/2}$ in the rates of decay;
note also that this takes into account modulation in wave number, whereas
the approximation of \cite{SSSU} does not.
The results of \cite{SSSU} on the other hand
include also convergence (at the same rate)
to a ``nonlinear diffusion wave'' $\phi$ as described in \eqref{diff}.
Combining \eqref{ver} with \eqref{heatconv}, we recover this bound as well,
but with localization $\sim |x|^{-2}$ closer to the $|x|^{-5/2}$ assumption
of \cite{SSSU}.

\section{Preliminaries}\label{s:prelim}

Recall the Bloch solution formula for periodic-coefficient
operators,
\be\label{fullS}
(S(t) g)(x):=(e^{tL}g)(x) = \int_{-\pi}^{\pi} e^{i\xi x} (e^{tL_\xi}  \check g(\xi, \cdot))(x) d\xi,\footnote{In other words,
$\check{(e^{tL}g)}(\xi,x)= (e^{tL_\xi} \check g(\xi,\cdot))(x)$ a consequence of \eqref{defrel}.}
\ee
where
$L_\xi$ is as in \eqref{Lxi},
\be\label{checkg}
\check g(\xi,x):= \sum_{j\in \ZM} \hat g(\xi+2j\pi) e^{i2\pi jx}
\quad
\hbox{\rm (periodic in $x$)}
\ee
denotes the Bloch transform of $g$,
$\hat g(\xi):=\frac{1}{2\pi} \int_\RM e^{-i\xi x}   g(x) dx$
the Fourier transform, and
\be\label{Brep}
g(x)= \int_{-\pi}^{\pi} e^{i\xi x}  \check g(\xi,x) d\xi
\ee
the inverse Bloch transform, or Bloch representation of $g\in L^2(\RM)$.
The generalized Hausdorff--Young inequality
$\|u\|_{L^p(\RM)} \le (2\pi)^{1/p}\| \check u\|_{L^q([-\pi,\pi],L^p([0,1]))}$
for $q\le 2\le p $ and $\frac{1}{p}+\frac{1}{q}=1$ \cite{JZ1},
together with \eqref{Brep}, yields for any $1$-periodic
function $g(\xi,\cdot)$
\be\label{hy}
\Big\|\int_{-\pi}^{\pi}e^{i\xi \cdot} g(\xi,\cdot)d\xi\Big\|_{L^p(\RM)}
\le (2\pi)^{1/p}\|g\|_{L^q([-\pi,\pi],L^p([0,1]))}
\:\; {\rm for} \:\;
q\le 2\le p \:\; {\rm and } \:\; \frac{1}{p}+\frac{1}{q}=1
\ee
where, here and elsewhere, we are denoting
$$
\|g\|_{L^q([-\pi,\pi],L^p([0,1]))}:=\Big(\int_{-\pi}^\pi\|g(\xi,\cdot)\|_{L^p([0,1])}^{q}d\xi\Big)^{1/q}.
$$

By (D3), the zero eigenfunction $\bar u'$ of $L_0$ is simple,
whence by standard perturbation results
bifurcates to an eigenfunction $\phi(\xi,\cdot)$, with associated
left eigenfunction $\tilde \phi(\xi, \cdot)$ and eigenvalue
\be\label{lambda}
\lambda(\xi)= ai \xi - d \xi^2 + O(|\xi|^3),
\ee
where $a$ and $d$ are real and $ d>0$ by assumption (D2) and
complex symmetry $\lambda(\xi)=\bar \lambda(-\xi)$,
each of $\phi$, $\tilde \phi$, $\lambda$ analytic in $\xi$ and defined
for $|\xi|\leq\xi_0$, $\xi_0$ being positive and sufficiently small.

Before refining linear estimates of \cite{JNRZ1}, we need some extra spectral preparation. For this purpose, we set $\tilde \phi(0)=\bar u^{ad}$, 
assume that the
 parametrization is normalized according to \eqref{gauge}, and normalize eigenfunctions in such a way that
\be\label{gauge_eig}
\langle \bar u^{ad},\phi(\xi)\rangle_{L^2([0,1])}\ =\ \langle \tilde\phi(0),\phi(\xi)\rangle_{L^2([0,1])}\ =\ 1
\ee
for all $\xi\in[-\xi_0,\xi_0]$.
A similar preparation is used in \cite{DSSS}; see \cite[Section 4.2]{DSSS}.

\bl[\cite{DSSS}]\label{spectral_connection}
Assuming (H1)-(H2) and (D3), normalize according to \eqref{gauge} and \eqref{gauge_eig}. Then
\be\label{spec_prep1}
\d_\xi\phi(0,\cdot)\ =\ i\ks\d_k u(\cdot;\ks)
\ee
and the constants $a$ and $d$ in \eqref{lambda} are
\be\label{spec_prep2}
a=-\ks c'(\ks),\qquad
d=\ks(1+2\ks\langle u^{ad},\d_k\bar u'(\ks)\rangle_{L^2([0,1])}).
\ee
\el

\begin{proof}
To expand the equation $L_\xi\phi(\xi)=\lambda(\xi)\phi(\xi)$, we split $L_\xi$
$$
\ks L_\xi = L^{(0)} + i\ks \xi L^{(1)} + (i\ks\xi)^2 L^{(2)}.
$$
Then we find
$i\ks a\bar u'=L^{(0)}\d_\xi\phi(0)+i\ks L^{(1)}\bar u'$. Yet differentiating the profile equation with respect to $k$ yields $-\ks c'(\ks)\bar u'=L^{(0)}\d_k\bar u(\ks)+L^{(1)}\bar u'$.
Thus, taking scalar product with $\bar u^{ad}$ leads to
$$
a\ =\ \langle u^{ad},L^{(1)}\bar u'\rangle_{L^2([0,1])}\ =\ -c'(\ks)
$$
hence $L^{(0)}\d_\xi\phi(0)=L^{(0)}\d_k\bar u(\ks)$ and therefore $\d_\xi\phi(0)-\d_k\bar u(\ks)\in \ker L_0=\CM \bar u'$. Normalizations \eqref{gauge} and \eqref{gauge_eig} reduce it to \eqref{spec_prep1}. Afterwards, expanding further, we find
$$
\ks^2 c'(\ks)\d_k\bar u(\ks)-\ks d\ \bar u'\ =\ \frac12 L^{(0)}\d_\xi^2\phi(0)+(i\ks)^2L^{(1)}\d_k\bar u(\ks)+(i\ks)^2L^{(2)}\bar u'.
$$
Taking scalar product with $\bar u^{ad}$ again gives
$$
d\ =\ \ks\langle u^{ad},L^{(1)}\d_k\bar u(\ks)+L^{(2)}\bar u'\rangle_{L^2([0,1])}.
$$
which, thanks to the explicit form
$$
L^{(1)}\d_k\bar u'+L^{(2)}\bar u'\ =\ 2\ks\d_k\bar u(\ks)'+c(\ks)\d_k\bar u(\ks)+\bar u',
$$
completes the proof.
\end{proof}

From now on, although we do not repeat them, we always assume the above normalizations.

\section{Linear estimates}\label{s:linear}

Now, refining slightly the decomposition of \cite{JZ1,JNRZ1},
decompose the solution operator as
\be\label{decomp}
S(t)=R^{\rm p}(t)+ \tilde R(t),
\qquad
R^{\rm p}(t)=(\bar u' + \ks\partial_k \bar u \partial_x) s^{\rm p}(t),
\ee
with
\ba\label{sp}
(s^{\rm p}(t)g)(x)&=\int_{-\pi}^{\pi}
e^{i\xi x}\alpha(\xi) e^{\lambda(\xi)t} \langle \tilde \phi(\xi,\cdot), \check g(\xi,\cdot)\rangle_{L^2([0,1])} d\xi,
\ea
and
\ba\label{tildeS}
(\tilde R(t) g)(x)&:=
\int_{-\pi}^{\pi} e^{i\xi x} (1-\alpha(\xi))
(e^{L_\xi t}  \check g(\xi))(x) d\xi
+ \int_{-\pi}^{\pi} e^{i\xi x} \alpha(\xi)
(e^{L_\xi t} \tilde \Pi(\xi) \check g(\xi))(x) d\xi\\
&
+ \int_{-\pi}^{\pi}
e^{i\xi x}\alpha(\xi) e^{\lambda(\xi) t}(\phi(\xi,x)-\phi(0,x)-\xi\partial_\xi\phi(0,x))
\langle \tilde \phi(\xi),\check g(\xi) \rangle_{L^2([0,1])} d\xi
,
\ea
where $\alpha$ is a smooth cutoff function such that $\alpha(\xi)=0$ for
$|\xi|\ge \xi_0$ and $\alpha(\xi)=1$ for $|\xi|\le \frac{1}{2}\xi_0$,
\be\label{Pi}
\Pi^{\rm p}(\xi):=
\phi(\xi)\langle \tilde \phi(\xi), \cdot\rangle_{L^2([0,1])}
\ee
denotes the eigenprojection onto the eigenspace $\Range \{\phi(\xi)\}$
bifurcating from $\Range \{ \bar u'\}$ at $\xi=0$, $\tilde \phi$ the associated left
eigenfunction, and $\tilde \Pi:=\Id-\Pi^{\rm p}$,
each well-defined on $\suppt(\alpha)\subset[-\xi_0,\xi_0]$.

\bpr[\cite{JZ1,JNRZ1}]\label{greenbds}
Under assumptions (H1)-(H2) and (D1)-(D3),
for all $t>0$, $2\leq p\leq \infty$,
\begin{align}\label{finale}
\left\|
\partial_x^l\partial_t^m s^{\rm p}(t) \partial_x^r g \right\|_{L^p(\RM)}
\lesssim
\min \begin{cases}
(1+t)^{-\frac{1}{2}(1-1/p)-\frac{l+m}{2}}\|g\|_{L^1(\RM)},\\
(1+t)^{-\frac{1}{2}(1/2-1/p)-\frac{l+m}{2}}
\|g\|_{L^2(\RM)},
\end{cases}
\end{align}
for
$0\leq r\leq K+1$,
and for some $\eta>0$
and $0\leq l+2m,r \leq K+1$,

\begin{align}\label{finalg}
\left\|\partial_x^l \partial_t^m \tilde R(t) \partial_x^r g\right\|_{L^p(\RM)}
&\lesssim
\min
\begin{cases}
(1+t)^{-\frac{1}{2}(1-1/p)-\frac{1}{2}}\| g\|_{L^1(\RM)\cap H^{l+2m+1}(\RM)},\\
e^{-\eta t}\|\partial_x^r g\|_{H^{l+2m+1}(\RM)}+(1+t)^{-\frac{1}{2}(1/2-1/p)-1}\|g\|_{L^2(\RM)},\\
\end{cases}
\end{align}
\epr

\begin{proof}
The estimates on $s^{\rm p}$ were proved in \cite{JNRZ1}. Using \eqref{hy} together with spectral projection/direct computation for low
Floquet numbers
and standard semigroup estimates for high Floquet numbers,
estimates on $\tilde R$ are proved exactly as were the estimates on $\tilde S$ in \cite{JNRZ1}, with the observation
that the substitution of factor
$ (\phi(\xi)-\phi(0)-\xi \partial_\xi \phi(0))$ in $\tilde R$ for factor
$ (\phi(\xi)-\phi(0))$ in $\tilde S$ introduces an additional factor
of $|\xi|$ in the estimates, hence an additional $(1+t)^{-1/2}$ factor of decay.
As we will build on the main arguments of these proofs to establish further linear estimates, we briefly recall them here.

{\it (i) (Proof of \eqref{finale}).}
In the case $l=m=r=0$ estimates on $s^{\rm p}$ 
follows by choosing successively $s=1$ and $s=2$, introducing $s'$ such that $1/s+1/s'=1$,
and observing that
$$
\begin{aligned}
\displaystyle
\Big\|\ x&\mapsto\int_{-\pi}^{\pi}
e^{i\xi x}\alpha(\xi) e^{\lambda(\xi) t}
\langle \tilde \phi(\xi),\check g(\xi) \rangle_{L^2([0,1])}d\xi\ \Big\|_{L^p(\RM)}\\
&\displaystyle\lesssim
\|\ (\xi,x)\mapsto\alpha(\xi) e^{\lambda(\xi) t} |\langle \tilde \phi(\xi),\check g(\xi) \rangle_{L^2([0,1])}|\ \|_{L^q([-\pi,\pi],L^p([0,1]))}\\
&\displaystyle\lesssim
\|\xi\mapsto e^{-\eta \xi^2 t}\|_{L^{r_{s,p}}([-\pi,\pi])}\ \|\xi\mapsto\alpha(\xi)^{1/2}|\langle \tilde \phi(\xi),\check g(\xi) \rangle_{L^2([0,1])}|\|_{L^{s'}([-\pi,\pi])}\\
&\displaystyle\lesssim
(1+t)^{-\frac{1}{2}(1/s-1/p)}\|\xi\mapsto\alpha(\xi)^{1/2}|\langle \tilde \phi(\xi),\check g(\xi) \rangle_{L^2([0,1])}|\|_{L^{s'}([-\pi,\pi])},
\end{aligned}
$$
where $1/p+1/q=1$ and $1/s'+1/r_{s,p}=1/q$, so that  $1/r_{s,p}=1/s-1/p$. Here
we have used the fact that, by (D2), for some $\eta>0$, $|e^{\lambda(\xi)t}\alpha^{1/2}(\xi)|
\le e^{-\eta \xi^2 t}$.

For $s=2$, we combine this with the Cauchy-Schwarz inequality and Parseval identity to obtain
$$
\begin{aligned}
\Big\|\xi\mapsto\alpha(\xi)^{1/2}|\langle \tilde \phi(\xi),\check g(\xi)\rangle_{L^2([0,1])}\Big\|_{L^{2}([-\pi,\pi])}
&\leq \sup_{|\xi|\leq\xi_0}\|\tilde \phi(\xi,\cdot)\|_{L^2([0,1])}\ \|\check g\|_{L^2([-\pi,\pi],L^2([0,1])}\\
&\lesssim \sup_{|\xi|\leq\xi_0}\|\tilde \phi(\xi,\cdot)\|_{L^2([0,1])}\ \|g\|_{L^2(\RM)},
\end{aligned}
$$
while for $s=1$ we expand
$
\langle \tilde \phi(\xi),\check g(\xi)\rangle_{L^2([0,1])}=\sum_{j\in\ZM}\hat{\tilde \phi  }_{j}(\xi)^* \hat g(\xi+2j\pi),
$
where
$\hat{\tilde \phi }_{j}(\xi)$ denotes the $j$th Fourier coefficient in the
Fourier expansion of periodic function $\tilde \phi(\xi,\cdot)$, and $z^*=\bar z$ denotes complex conjugate, and apply Hausdorff-Young's inequality, $\|\hat g\|_{L^\infty(\RM)}\le \|g\|_{L^1(\RM)}$ with
Cauchy--Schwarz' inequality,
\[
\alpha^{1/2}(\xi)\sum_j |\hat{\tilde  \phi}_j(\xi)|\le
\alpha^{1/2}(\xi)\sqrt{\sum_j (1+|j|^2)|\hat {\tilde \phi}_j(\xi)|^2\sum_j (1+|j|^{-2})}
\le C\alpha^{1/2}(\xi) \|\tilde \phi(\xi)\|_{H^1([0,1])},
\]
to get
$$
\begin{aligned}
\|\xi\mapsto\alpha(\xi)^{1/2}|\langle \tilde \phi(\xi),\check g(\xi)\rangle_{L^2([0,1])}|\|_{L^{\infty}([-\pi,\pi])}
&\lesssim \sup_{|\xi|\leq\xi_0}\|\tilde \phi(\xi,\cdot)\|_{H^1([0,1])}\ \|g\|_{L^1(\RM)}.
\end{aligned}
$$

This yields the result for $l=m=r=0$. Estimates for general $l,m,r \ge 0$ go similarly, passing $\partial_x^r$ derivatives onto $\tilde \phi(\xi)$ in the inner product using integration by parts and noting that $\partial_x^l$ and $\partial_t^m$ derivatives bring down bounded factors $(i\xi)^l$ and $\lambda(\xi)^m$ enhancing decay through
$$
\|\xi\mapsto |\xi|^{l+m}e^{-\eta \xi^2 t}\|_{L^{r_{s,p}}([-\pi,\pi])}\lesssim (1+t)^{-\frac{1}{2}(1/s-1/p)-\frac{l+m}{2}}
$$

\medskip
{\it (ii) (Proof of \eqref{finalg}).}
By (D1)--(D2), benefiting from standard parabolic resolvent estimates \cite{He} together with the fact that (by basic ODE regularity theory) $H^{l+1}([0,1])$ and $L^2([0,1])$ spectra coincide, we may apply Pr\"uss' Theorem \cite{Pr} and get
$$
|e^{L_\xi t}(1-\alpha(\xi))|_{H^{l+1}([0,1])\to H^{l+1}([0,1])},\;\;
|\alpha(\xi) e^{L_\xi t}\tilde \Pi(\xi)|_{H^{l+1}([0,1])\to H^{l+1}([0,1])}
\lesssim e^{-\eta t},
\quad \eta>0,
$$
whence, by Sobolev embedding,
$$
|\partial_x^l e^{L_\xi t}(1-\alpha(\xi))|_{H^{l+1}([0,1])\to L^p([0,1])},\;\;
|\partial_x^j \alpha(\xi) e^{L_\xi t}\tilde \Pi(\xi)|_{H^{l+1}([0,1])\to L^p([0,1])}
\lesssim e^{-\eta t}
$$
for $2\le p\le \infty$. The $W^{l,p}(\RM)$ norms of the first two terms of \eqref{tildeS}, by \eqref{hy} and
Parseval's identity,
$$
\frac{1}{2\pi}\|g\|_{H^{l+1}(\RM)}^2
=\|\check g\|_{L^2([-\pi,\pi],L^2([0,1]))}^2+\big\|(\partial_x +i\xi)^{l+1}\check g(\xi)\big\|_{L^2([-\pi,\pi],L^2([0,1]))}^2,
$$
are thus bounded by $Ce^{-\eta t}\|g\|_{H^{l+1}(\RM)}$. The $W^{l,p}(\RM)$ norm of the third term may be bounded similarly as in the estimation of $s^{\rm p}$ above, noting that the factor
$(\phi(\xi)-\phi(0)-\xi\d_\xi\phi(0))=O(\xi^2)$ introduces an additional factor of $(1+t)^{-1}$ decay.
This establishes the result for $m=r=0$; other cases go similarly,
noting that $\partial_t e^{L_\xi t} \tilde \Pi(\xi)= L_\xi e^{L_\xi t} \tilde \Pi(\xi)$,
with $L_\xi$ a second-order operator, so that we may essentially trade one $t$-derivative for two $x$-derivatives.
\end{proof}

\bpr[\cite{JNRZ1}]\label{modprop}
Under assumptions (H1)--(H2) and (D1)--(D3),
for all $t>0$ and $2\leq p\leq \infty$,
\be\label{Spmod}
\| \partial_x^l\partial_t^m s^{\rm p} (t)  (h_0\bar u')\|_{L^p (\RM)}
\lesssim
(1+t)^{-\frac{1}{2}(1-1/p)+\frac{1}{2}-\frac{l+m}{2}}
\|\partial_x h_0\|_{L^1 (\RM)},
\ee
when $l+m\ge 1$ or else $l=m=0$ and $p=\infty$,
and, for $0\leq l+2m \leq K+1$,
\be\label{tildeSmod}
\|\partial_x^l \partial_t^m \tilde R (t)(h_0\bar u')\|_{L^p(\RM)}
\lesssim (1+t)^{-\frac{1}{2}(1-1/p)-1}
 \|\partial_x  h_0\|_{L^1 (\RM) \cap H^{l+2m+1} (\RM)
},
\ee
and when $t\le 1$
\be\label{spdiff}
\begin{array}{rcl}
\|\partial_x^l \partial_t^m (R^{\rm p} (t) -\Id) (h_0\bar u')\|_{L^p (\RM) }
&\lesssim&\|\partial_x h_0\|_{L^1 (\RM)\cap H^{l+2m+1}(\RM)},\\
\|\partial_x^l \partial_t^m (s^{\rm p} (t)(h_0\bar u')- h_0)\|_{L^p (\RM)}
&\lesssim&
\|\partial_x h_0\|_{L^1 (\RM)\cap L^{2}(\RM)}.
\end{array}
\ee
\epr

\begin{proof}
Again, estimates on $s^{\rm p}$ were proved in \cite{JNRZ1} whereas the proof of those on $R^{\rm p}$ and $\tilde R$ goes exactly as the proof of the corresponding estimates for $S^{\rm p}$ and $\tilde S$ in \cite{JNRZ1},
noting that the substitution of factor
$ (\phi(\xi)-\phi(0)-\xi \partial_\xi \phi(0))$ in $\tilde R$ for factor
$ (\phi(\xi)-\phi(0))$ in $\tilde S$ introduces an additional factor
of $(1+t)^{-1/2}$ in the decay rate for $\tilde R$.
We briefly outline the
new points of the arguments.

{\it (i) (Proof of \eqref{Spmod}).}
The inequality \eqref{Spmod} was established in \cite{JNRZ1},
Proposition 4.1.
\medskip

{\it (ii) (Proof of \eqref{tildeSmod}).}
Contribution of the last term in \eqref{tildeS} is bounded as in the proof of \eqref{Spmod}. For the remaining terms, we first split
$$
(h_0\bar{u} ')\,\check{}\ (\xi,x)=\bar u'(x)\check{h_0}(\xi,x)
=\sum_{j\in\ZM}\bar u'(x) e^{2\pi j x}\widehat{h_0}(2\pi j+\xi)
$$
then apply to each term of the sum the semigroup bounds as above and, after noting that $(1-\alpha(\xi))\lesssim \xi$ and that
$$
\tilde \Pi(\xi) \bar u'=
\tilde \Pi(\xi)\Big[(\tilde \Pi(\xi)-\tilde \Pi(0)) \bar u' \big]
\quad\hbox{\rm with}\quad
|\tilde \Pi(\xi)-\tilde \Pi(0)|_{H^{l+2m+1}([0,1])\to H^{l+2m+1}([0,1])}
\lesssim |\xi|,
$$
achieve the proof with the Cauchy--Schwarz inequality
$$
\begin{aligned}
\Big\|x\mapsto\bar u'(x)\widehat{\d_x h_0}(\xi) \Big\|_{ H^{l+2m+1}([0,1])}
&+\sum_{j\neq0}\Big\|x\mapsto\frac{\bar u'(x) e^{2i\pi jx}}{i(\xi+2\pi j)}
\widehat{\d_x h_0}(\xi+2\pi j) \Big\|_{ H^{l+2m+1}([0,1])}\\
&\lesssim |\widehat{\d_x h_0}(\xi)|+
\sum_{j\neq0} \left|\frac{(2\pi j)^{l+2m+1}
\widehat{(\d_x^{l+2m+2} h_0)}(\xi+2\pi j)}{(\xi+2\pi j)^{l+2m+2}}\right|\\
&\lesssim |\widehat{\d_x h_0}(\xi)|+
\sqrt{ \sum_j \frac{1}{(1+|j|)^2} \sum_{j'}|\widehat{(\d_x^{l+2m+2} h_0)}(\xi+2\pi j')|^2}\\
&\lesssim |\widehat{\d_x h_0}(\xi)|+\sqrt{\sum_{j\in\ZM}|\widehat{(\d_x^{l+2m+2} h_0)}(\xi+2\pi j)|^2},
\end{aligned}
$$
the $L^2([-\pi,\pi])$ norm of the last quantity being bounded by $\|\d_x h_0\|_{H^{l+2m+1}(\RM)}$.

\medskip
{\it (iii) (Proof of \eqref{spdiff}).}
Expanding
$
R^{\rm p}(t)-\Id=(R^{\rm p}(t)-R^{\rm p}(0)) -\tilde R(0)
=t\partial_t R^{\rm p}(s(t)) -\tilde R(0)
$
for some $0<s(t)<t$, we obtain the first inequality by combining \eqref{Spmod}
and \eqref{tildeSmod}.
The second inequality was established in \cite{JNRZ1}, Proposition 4.1.
\end{proof}

We require also the following key new estimates, proved by similar techniques.
The first statement reflects the fact that modulations approximately travel at reference group speed $a$. The second one quantifies the fact that the (parabolic) second-order linearized modulation equation is given by \eqref{heat}.

\bl\label{cancel}
Under assumptions (H1)-(H2) and (D1)-(D3),
for all $t>0$, $2\leq p\leq \infty$,
\begin{align}\label{e:cancel}
\left\|(\partial_t-a\partial_x)(s^{\rm p}(t) g)\right\|_{L^p(\RM)}
\lesssim
\min
\begin{cases}
(1+t)^{-\frac{1}{2}(1-1/p)-1}\|g\|_{L^1(\RM)},\\
(1+t)^{-\frac{1}{2}(1/2-1/p)-1}\|g\|_{L^2(\RM)},\\
\end{cases}
\end{align}
\be\label{dSpmod}
\| (\partial_t - a\partial_x) (s^{\rm p}(t)(h_0\bar u'))\|_{L^p(\RM)}
\lesssim(1+t)^{-\frac{1}{2}(1-1/p)-\frac{1}{2}}
\|\partial_x h_0\|_{L^1(\RM)}.
\ee
\el

\begin{proof}
Differentiating \eqref{sp}, we have
\be\label{dsp}
(\partial_t-a\partial_x)(s^{\rm p}(t)g)(x)
=\int_{-\pi}^{\pi} (\lambda(\xi)- ai\xi)e^{i\xi x}\alpha(\xi) e^{\lambda(\xi)t}
\langle \tilde \phi(\xi), \check g(\xi)\rangle_{L^2([0,1])}(\xi) d\xi
\ee
with $\lambda(\xi)-ia\xi=O(|\xi|^2) $, whence the result follows, again by \eqref{hy}.
\end{proof}

\bpr\label{heatcomp}
Assuming (H1)--(H2) and (D1)--(D3),
let $\sigma (t)$ be the solution operator of the convected
heat equation
\be\label{heat}
u_t=au_x+d u_{xx},
\ee
where $a$, $d$ are as in \eqref{lambda},
and let $g$ be a periodic function on $[0,1]$, $g\in L^2([0,1])$.
Then, for all $t>0$, $2\leq p\leq \infty$,
and $l\in\NM$,
\ba\label{heatspdiffx}
\| \partial_x^l s^{\rm p}(t)(h_0 g)-&\langle \tilde \phi(0),g\rangle\ \sigma(t)(\partial_x^l h_0)\|_{L^p(\RM)}\\
&\lesssim
\begin{cases}
(1+t)^{-\frac{1}{2}(1-1/p)-\frac12}t^{-\frac{l-1}{2}} \|\partial_x h_0\|_{L^1(\RM)\cap L^\infty(\RM)}&\quad l\geq1;\\
(1+t)^{-\frac{1}{2}(1-1/p)-\frac12}\|\partial_x h_0\|_{L^1(\RM)\cap L^2(\RM)}&\quad l=0.
\end{cases}
\ea
\epr

\begin{proof}
Expressing
$$
\begin{array}{rcl}
\displaystyle
\langle \tilde \phi(0),g\rangle_{L^2([0,1])}\, (\sigma(t)\partial_x^l h_0)(x)&=&
\displaystyle
\int_\RM(i\xi)^le^{i\xi x} e^{(ia\xi -d\xi^2)t} \langle \tilde \phi(0),g\rangle_{L^2([0,1])}\,
\widehat{h_0} (\xi) d\xi\\[1em]
\displaystyle
\partial_x^l s^{\rm p}(t)(gh_0)(x)&=&
\displaystyle
\int_{-\pi}^{\pi} (i\xi)^l e^{i\xi x}\alpha(\xi) e^{\lambda(\xi)t}
\sum_j \langle \tilde \phi(\xi),g e^{i2\pi j \cdot }\rangle_{L^2([0,1])}
\widehat{h_0} (\xi+2j\pi) d\xi
\end{array}
$$
and subtracting, we obtain, for $l\in\NM$,
$$
\begin{aligned}
\big[\langle \tilde \phi(0),g\rangle \sigma(t)(\d_x^l h_0)&-\d_x^l(s^{\rm p}(t)(gh_0))\big](x)
=
\int_{\RM\setminus[-\pi,\pi]}
e^{i\xi x} e^{(ia\xi -d\xi^2)t}
(i\xi)^{l-1}
\langle \tilde \phi(0),g\rangle\,\widehat{\d_x h_0}(\xi)d\xi\\
&\quad+
\int_{-\pi}^{\pi}
e^{i\xi x}(1-\alpha(\xi)) e^{(ia\xi -d\xi^2)t}(i\xi)^{l-1}
\langle \tilde \phi(0),g\rangle\,\widehat{\d_x h_0}(\xi) d\xi\\
&\quad+
\int_{-\pi}^{\pi}
e^{i\xi x}\alpha(\xi)(i\xi)^{l-1} (e^{(ia\xi-d\xi^2)t}-e^{\lambda(\xi)t})
\langle \tilde \phi(0),g\rangle\,\widehat{\d_x h_0}(\xi) d\xi\\
&\quad+
\int_{-\pi}^{\pi}
e^{i\xi x}\alpha(\xi) e^{\lambda(\xi)t} (i\xi)^{l-1}
\langle \tilde \phi(0)-\tilde \phi(\xi),g\rangle\,\widehat{\d_x h_0} (\xi) d\xi\\
&\quad-
\int_{-\pi}^{\pi}
e^{i\xi x}\alpha(\xi) e^{\lambda(\xi)t} (i\xi)^{l}
\sum_{j\ne 0} (\widehat{ \tilde \phi(\xi)g})_j ^*
\left(\frac{1}{i(\xi+2j\pi)}\right) \widehat{\d_x h_0} (\xi+2j\pi) d\xi.
\end{aligned}
$$
where $(\widehat{ \tilde \phi(\xi)g})_j$  denotes the $j$th coefficient in the Fourier expansion of periodic function $\tilde \phi(\xi)g$. 
When $l=0$, the first term is bounded in $L^p(\RM)$ by $Ce^{-\eta t}\|\d_xh_0\|_{L^{1/(1/p+1/2)}(\RM)}$ using
$$
\left\|\xi\mapsto e^{-d\xi^2 t}\xi^{-1}\right\|_{L^2(\RM\setminus[-\pi,\pi])}\lesssim e^{-\eta t}.
$$
When $l\geq1$, it is bounded by $Ct^{-\frac{l-1}{2}}e^{-\eta t}\|\d_xh_0\|_{L^p(\RM)}$ as the convolution of $\d_xh_0$ with a kernel that is bounded pointwisely (using Haussdorff-Young estimates) by
$$
x\ \longmapsto\ C\ t^{-\frac{l}{2}}e^{-\eta\,t}\frac{1}{1+\frac{x^2}{t}}
$$
and therefore is bounded in $L^1(\RM)$ by $Ct^{-\frac{l-1}{2}}e^{-\eta\,t}$, for some $\eta>0$.
Now, using $(1-\alpha(\xi))\lesssim |\xi|$,
$$
\|\tilde \phi(\xi)-\tilde \phi(0)\|_{L^2([0,1])}\leq |\xi| \sup_{|\xi'|\leq\xi_0}\|\d_\xi\tilde\phi(\xi')\|_{L^2([0,1])},
$$
$\lambda(\xi)-(ia\xi-d\xi^2)=O(|\xi|^3)$ to get
$$
|e^{(ia\xi-d\xi^2)t}-e^{\lambda(\xi)t}|\ =\
e^{-d\xi^2t}|1-e^{(\lambda(\xi)-(ia\xi-d\xi^2))t}|\
\lesssim\ |\xi|^3 e^{-d\xi^2t}
$$
and by Cauchy-Schwarz estimate
$$
\begin{aligned}
\sup_{|\xi|\leq\xi_0}\left|\sum_{j\ne 0} (\widehat{ \tilde \phi(\xi)g})_j ^*
\left(\frac{1}{i(\xi+2j\pi)}\right) \widehat{\d_x h_0} (\xi+2j\pi)\right|
&\lesssim
\| \partial_x h_0\|_{L^1(\RM)}
\sup_{|\xi|\leq\xi_0}\sum_{j\ne 0} \frac{|\widehat{(\tilde \phi(\xi)g)}_j|}{(1+|j|)}\\
&\lesssim
\|\partial_x h_0\|_{L^1(\RM)}
\sup_{|\xi|\leq\xi_0}\|\tilde \phi(\xi) g\|_{L^2([0,1])} \\
&\lesssim
\|g\|_{L^2([0,1])}
\sup_{|\xi|\leq\xi_0}\|\tilde \phi(\xi)\|_{L^{\infty}([0,1])} \|\partial_x h_0\|_{L^1(\RM)},
\end{aligned}
$$
we may apply \eqref{hy} to the other terms and bound them in $L^p(\RM)$ by
$$
\|\xi\mapsto |\xi|^{l}e^{-\eta\xi^2t}\|_{L^q([-\pi,\pi])}\ \|\d_xh_0\|_{L^1(\RM)}
\lesssim (1+t)^{-\frac{1}{2}(1-1/p)-\frac{l}{2}}\ \|\d_xh_0\|_{L^1(\RM)},
$$
with $q$ such that $1/p+1/q=1$ and $\eta>0$.
\end{proof}

\section{Nonlinear stability estimates}\label{s:pert}

\subsection{Perturbation equations and nonlinear decomposition}
We now refine the nonlinear perturbation equations of \cite{JNRZ1}.
First, recall the iteration scheme introduced in \cite{JNRZ1}.
For $\tilde{u}$ satisfying $\ks u_t=\ks^2u_{xx}+f(u)+\ks cu_x$,
we introduced the nonlinear perturbation
\be\label{veq}
 v(x,t)= \tilde{u}(x-\psi(x,t),t)-\bar{u}(x)
\ee
and showed that it satisfies
\be\label{vcancel}
\ks\left(\partial_t-L\right)(v+\psi\bar u_x)=
\ks\mathcal{N}:=\cQ+ \cR_x +(\ks\d_t+\ks^2\d_x^2)S+T,
\ee
where
\ba\label{eqn:QRST}
\cQ:=f(v+\bar{u})-f(\bar{u})-df(\bar{u})v,
&\qquad
\cR:=-\ks v\psi_t -\ks^2 v\psi_{xx}+ \ks^2(\bar u_x +v_x)\frac{\psi_x^2}{1-\psi_x},\\
\cS:= v\psi_x ,
\quad \hbox{\rm and }
&\qquad
\cT:=-\left(f(v+\bar{u})-f(\bar{u})\right)\psi_x .
\ea

Defining the phase
$\psi$ implicitly by
\ba\label{psidef}
\psi(t)&=s^{\rm p}(t)(d_0+h_0\bar u')+\int_0^t s^{\rm p}(t-s)\N(s)ds
\\
&-(1-\chi(t))\Big(s^{\rm p}(t)(d_0+h_0\bar u')-h_0+\int_0^t s^{\rm p}(t-s)\N(s)ds\Big),
\ea
where $ d_0:=\tilde u_0(\cdot-h_0(\cdot))-\bar u$ and $\chi$ is
a smooth cutoff such that $\chi(t)$ is zero for $t\le 1/2$
and one for $t\ge 1$, applying Duhamel's principle (variation
of constants), and rearranging terms, we then obtained
the integral representation
\ba\label{oldnl}
v(t)&=\tilde S(t) (d_0+h_0\bar u')+ \int_0^t \tilde S(t-s) \N(s) ds\\
&\quad
+(1-\chi(t))\Big(\bar u' s^{\rm p}(t)(d_0+h_0\bar u')-h_0\bar u'+\int_0^t \bar u' s^{\rm p}(t-s)\N(s)ds\Big),\\
\ea
closing the system for $(v,\psi)$, where $\tilde S(t):=S(t)-\bar u' s^{\rm p}(t)$.

As described in \cite{JNRZ1}, the choice \eqref{psidef} is designed to
cancel, modulo an ``initial layer'' term $(1-\chi)(\dots)$ vanishing
for $t\ge 1$, any $\bar u' s^{\rm p}$ terms that would otherwise occur
in the description of $v$, that is, to extract as much as possible
the expected phase shift from the solution $\tilde u$.
Indeed, it is essentially uniquely determined by this requirement together
with the requirement that $\psi(\cdot,0)=h_0$.
The crucial fact making possible the estimates of Proposition \ref{oldmain}
is that the nonlinear term $\mathcal{N}$ is of quadratic order in terms of $v$
and derivatives of $\psi$ decaying at the rate of a heat kernel,
whereas propagators $\tilde S$ and $\nabla_{x,t} s^{\rm p}$ decay at the
rate of a differentiated heat kernel, leading to a closable iteration
in variables $(\nabla_{x,t}\psi, v)$ and derivatives.
This is analogous to the situation of the standard iteration scheme
used to show decay with respect to localized ($L^1$) data of
a Burgers equation $k_t-k_{xx}= (\frac12 k^2)_x$ treated as a perturbation
of the heat equation, which is perhaps natural in view of the
expected asymptotics \eqref{mainwhitham}.
We refer the reader to \cite{JNRZ1} for further details;
see also \cite{Z1,HoZ} for related arguments in the context of viscous shock stability.

Recall from the introduction that $\psi_x$ approximates perturbation
in wave number according to the formal Whitham approximation.
Accordingly, refining decomposition \eqref{veq} with 
Lemma \ref{spectral_connection} in mind,
we define the new nonlinear perturbation variable
\be\label{pertvar}
z(x,t) :=\tilde{u}(x-\psi(x,t),t)-\bar{u}(x)-\d_k\bar u(x)\ks\d_x\psi(x,t)=v(x,t)-\d_k\bar u(x)\ks\d_x\psi(x,t),
\ee
approximately separating out expected modulation in the wave-number
along with the phase shift.

\begin{lemma}\label{lem:canest}
For $\psi$ defined as in \eqref{psidef},
the nonlinear residual $z$ defined in \eqref{pertvar} satisfies
\ba\label{closed}
z(t)&=\tilde R(t) (d_0+h_0\bar u')+ \int_0^t \tilde R(t-s) \N(s) ds\\
&\quad
+(1-\chi(t))\Big(R^{\rm p}(t)(d_0+h_0\bar u')-h_0\bar u'-\d_k\bar u\,\ks\d_x h_0+\int_0^t R^{\rm p}(t-s)\N(s)ds\big).\\
\ea
\end{lemma}

\begin{proof}
Using
$z(t)=v(t)-\d_k \bar u\,\ks\psi_x(t)$, $\tilde R(t)=\tilde S(t)-\d_k\bar u\,\ks\d_x s^{\rm p}(t)$ and $R^{\rm p}(t)=(\bar u'+\d_k\bar u\,\ks \d_x)s^{\rm p}(t)$, equation \eqref{closed} follows immediately
from \eqref{psidef} and \eqref{oldnl}.
\end{proof}

\subsection{Refined stability estimate}\label{s:proof}
Comparing \eqref{oldnl} and \eqref{closed}, we see that we have
by these manipulations effectively exchanged for
$\tilde S$ and $s^{\rm p}$ the faster-decaying
propagators $\tilde R$ and $R^{\rm p}$ in the representations of $z$ vs. $v$.
With these improvements, we may refine Proposition \ref{oldmain} as follows.

\begin{proposition}\label{step}
Under the assumptions of Proposition \ref{oldmain},
for $t>0$,
$2\le p\le \infty$,
we have the sharpened estimate
\ba\label{sharpest}
\|z(t)\|_{L^p(\RM)} &\lesssim E_0 \ln(2+t)\ (1+t)^{-\frac{3}{4}}.
\ea
\end{proposition}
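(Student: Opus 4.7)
The plan is to start from the closed integral equation \eqref{closed}(i) for $z$ established in Lemma \ref{lem:canest},
$$
z(t) = \tilde R(t)(d_0 + h_0\bar u') + \int_0^t \tilde R(t-s)\N(s)\,ds + [\text{short-time correction}],
$$
and to exploit the fact that the refined decomposition \eqref{decomp} produces $\tilde R$ rather than $\tilde S$ on the right-hand side, with $\tilde R$ enjoying one extra half-power of time decay over the $\tilde S$ of \cite{JNRZ1}, as reflected in \eqref{finalg} and \eqref{tildeSmod}.

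For the initial-data contribution, applying \eqref{finalg} to $\tilde R(t)d_0$ and \eqref{tildeSmod} to $\tilde R(t)(h_0\bar u')$ gives $\|\tilde R(t)(d_0 + h_0\bar u')\|_{L^p} \lesssim E_0(1+t)^{-\frac{1}{2}(1-1/p)-1/2}$ for $2\le p\le\infty$, which is dominated by $E_0(1+t)^{-3/4}$ uniformly in $p$. The short-time correction terms, supported on $t\in[0,1]$, are controlled directly via \eqref{spdiff} and Proposition \ref{modprop}, giving a bound $\lesssim E_0$ that is trivially of the required form on this fixed interval.

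For the Duhamel integral the key observation is that each of $\cQ,\cR,\cS,\cT$ in \eqref{eqn:Q}--\eqref{eqn:T} is at least quadratic in $(v,\psi_x,\psi_t,\psi_{xx})$. Using H\"older's inequality with the $L^4$ rate $E_0(1+s)^{-3/8}$ from Proposition \ref{oldmain} (together with higher-derivative bounds on $\psi$ obtained by differentiating \eqref{closed}(ii) and invoking \eqref{finale}), one obtains
$$
\|\cQ(s)\|_{L^2} + \|\cR(s)\|_{L^2} + \|\cS(s)\|_{L^2} + \|\cT(s)\|_{L^2} \lesssim E_0^2(1+s)^{-3/4}.
$$
Combined with the $L^2\to L^p$ half of \eqref{finalg},
$$
\|\tilde R(t-s)\partial_x^n g\|_{L^p} \lesssim e^{-\eta(t-s)}\|\partial_x^n g\|_{H^{l+2m+1}} + (1+t-s)^{-\frac{1}{2}(1/2-1/p)-1}\|g\|_{L^2},
$$
the divergence structure of $\N$ in \eqref{N1} absorbs spatial derivatives into the $n$-index, leaving contributions of the form $\int_0^t (1+t-s)^{-\frac{1}{2}(1/2-1/p)-1}(1+s)^{-3/4}\,ds$. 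Splitting at $s=t/2$, the low-$s$ portion is bounded by $(1+t)^{-\frac{1}{2}(1/2-1/p)-3/4}$ and the high-$s$ portion by $(1+t)^{-3/4}\int_0^{t/2}(1+\tau)^{-\frac{1}{2}(1/2-1/p)-1}\,d\tau$. At $p=2$ the latter integral equals $\ln(1+t/2)$, producing the advertised logarithmic factor; for $p>2$ the exponent is strictly less than $-1$ and the integral is uniformly bounded, so no logarithm arises.

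The main technical obstacle is the $(\ks\partial_t+\ks^2\partial_x^2)\cS$ contribution in \eqref{N1}: while the spatial $\partial_x^2$ is absorbed by the $n$-index of \eqref{finalg}, the time derivative must first be removed by integration by parts in $s$, producing boundary terms $\cS(t)-\tilde R(t)\cS(0)$ together with a new kernel $(\partial_\tau\tilde R)(t-s)$. The $s=t$ boundary term $\cS(t)=v(t)\psi_x(t)$ is bounded by $E_0^2(1+t)^{-3/4}$ at $p=2$ via an $L^4\cdot L^4$ H\"older estimate, $\tilde R(t)\cS(0)=\tilde R(t)(d_0\,\partial_x h_0)$ is controlled via the $L^1$ half of \eqref{finalg}, and $(\partial_\tau\tilde R)$ enjoys the same decay rates as $\tilde R$ by \eqref{finalg} (which is insensitive to the $m$-index). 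Careful bookkeeping of regularity --- only $K\ge 3$ is assumed --- and uniform-in-$p$ control of the resulting bounds is where the delicate work lies.
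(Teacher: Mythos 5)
Your proposal is correct and follows essentially the same route as the paper: the integral equation \eqref{closed}(i), the sharpened $\tilde R$ bounds of Propositions \ref{greenbds} and \ref{modprop} applied to the data and short-time terms, a quadratic $L^2$ estimate $\|\N(s)\|\lesssim E_0(1+s)^{-3/4}$ deduced from the crude bounds of Proposition \ref{oldmain}, and the convolution $\int_0^t(1+t-s)^{-\frac12(1/2-1/p)-1}(1+s)^{-3/4}\,ds\lesssim\ln(2+t)(1+t)^{-3/4}$ with the logarithm arising at $p=2$. The only cosmetic difference is that the paper imports the bound $\|\N(t)\|_{H^1}\lesssim E_0(1+t)^{-3/4}$ wholesale from \cite{JNRZ1} and so never integrates by parts in $s$ for the $\partial_t\cS$ contribution, whereas you rebuild that estimate term by term.
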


\begin{proof}
From the basic bounds \eqref{mainest} of Proposition \ref{oldmain}, we have
$
\|\N(t)\|_{H^1(\R)}
\le CE_0 (1+t)^{-\frac{3}{4}}.
$
Applying the bounds of Propositions \ref{greenbds} and \ref{modprop}
to system \eqref{closed}, we obtain for any $2\le p \le \infty$
\ba\label{sest}
\|z (t) \|_{L^p(\RM)}& \le
CE_0(1+t)^{-\frac{1}{2}(1-1/p)-\frac{1}{2}} + CE_0 \int_{0}^{t} (1+t-s)^{-\frac{1}{2}(1/2-1/p)-1}(1+s)^{-\frac{3}{4}}ds\\
&
\le CE_0 \ln(2+t)\ (1+t)^{- \frac{3}{4}  }.
\ea
\end{proof}

\br\label{highnorm}
\textup{
We expect that it should be possible to improve \eqref{sharpest} to
$$
 \|z(t)\|_{L^p(\RM)} \lesssim
E_0 \ln(2+t)\ (1+t)^{-\frac{1}{2} (1-1/p)-\frac12},
\qquad
2\le p\le \infty,
$$
by substituting in \eqref{sest}
$W^{k,p}$ bounds on $\mathcal{N}$ and $W^{k,p}\to L^p$ bounds
on the solution operators $R^{\rm p}$ and $\tilde R$.
However, to obtain such $W^{k,p}\to L^p$ bounds for $p> 2$
would appear to require
techniques outside the Hausdorff--Young-type estimates used in this
paper, perhaps pointwise bounds as in \cite{J},
or (operator-valued) Fourier multiplier techniques as suggested in Remark 4.2.2, \cite{JNRZ1}.
}
\er

\section{The Whitham equation}
We complete our investigation by connecting
to the Whitham modulation equation.

\bl
Assuming (H1)--(H2), (D1)--(D3),
$\ks\mathcal{N}(t)= f^{\rm p}\ \ks^2\psi_x(t)^2 + r(t)$, where
\be\label{fsharp}
f^{\rm p}=\frac12d^2f(\bar u)(\d_k\bar u,\d_k\bar u)+\ks\d_k\bar u''-\frac{1}{\ks} df(\bar u)\d_k\bar u+\bar u''-a\d_k\bar u'
\ee
is periodic and $\|r(t)\|_{L^1(\RM)}\lesssim E_0^2(1+t)^{-1}$.
\el

\begin{proof}
Immediate from Lemma \ref{lem:canest},
\eqref{mainest},
\eqref{sharpest}, and
$
\|\psi_t(t)-a\psi_x(t)\|_{L^2(\RM)}=O((1+t)^{-3/4}) ,
$
a consequence of Lemma \ref{cancel}.
\end{proof}

\bl\label{quadcoeff}
Setting $e:=\langle \tilde \phi(0), f^{\rm p}\rangle_{L^2([0,1])}$, we have $e=\frac12\omega_0''(\ks)$.
\el

\begin{proof}
We may rewrite
$$
f^{\rm p}=-L\d_k\bar u+2\ks\d_k\bar u''-\omega_0'(\ks)\d_k\bar u'+\bar u''+\frac12d^2f(\bar u)(\d_k\bar u,\d_k\bar u)
$$
thus
$$
e=\langle\tilde\phi(0),2\ks\d_k\bar u''-\omega_0'(\ks)\d_k\bar u'+\bar u''+\frac12d^2f(\bar u)(\d_k\bar u,\d_k\bar u)\rangle_{L^2([0,1])}=\frac12\omega_0''(\ks)\ .
$$
The last equality comes differentiating twice the profile equation with respect to $k$ (see \cite{DSSS}).
\end{proof}

\begin{proof}[Proof of Theorem \ref{main}]

With estimates on  $\ks\partial_x \psi(t)$ from Proposition \ref{oldmain}, the estimate on $\tilde u$ follows
(recall Remark \ref{altconv})
from the bound
\be\label{weaker_refinedest}
\|\tilde u(\cdot- \psi(\cdot,t), t)-\bar u^{\ks}(\cdot)-\d_k\bar u^{\ks}(\cdot)\ks\d_x\psi(\cdot,t)\|_{L^p(\RM)}
\lesssim E_0 \ln(2+t)\ (1+t)^{-\frac{3}{4}}\
\ee
established in Proposition \ref{step} for $2\leq p\leq\infty$.

Using Duhamel's principle we may write
\be\label{temp}
k(t)=\sigma(t)\ks\partial_x h_0 + \int_0^t \sigma (t-s)
\partial_x (ek^2(s)))ds\ ,
\ee
where $\sigma$ is the constant-coefficient solution operator
defined in Proposition \ref{heatcomp}.
On the other hand, by Propositions
\ref{greenbds}, \ref{modprop}, and \ref{heatcomp},
\ba\label{repeat}
\ks\psi_x(t)
&=\ks\chi(t)\left(\partial_x s^{\rm p}(t) (d_0+h_0\bar u')
+
\int_0^t \partial_x s^{\rm p}(t-s) \N(s)  ds\right)
+(1-\chi(t))\ks\partial_x h_0,\\
&=\sigma(t)\ks\partial_x h_0 + \int_0^t \sigma (t-s) \partial_x (e(\ks\psi_x)^2(s))ds
+\tilde r(t),
\ea
where
$$
\|\tilde r(t)\|_{L^p(\RM)}\lesssim E_0\ln(2+t)\ (1+t)^{-\frac{1}{2}(1-1/p)-\frac{1}{2}},\quad 2\leq p\leq\infty\ .
$$
Here, we have used \eqref{heatspdiffx}
and
$$
\|\partial_x(\psi_x^2(s))\|_{L^1(\RM)\cap L^\infty(\RM)}\lesssim
\|\psi_x(s)\|_{L^2(\RM)\cap L^\infty(\RM)} \|\psi_{xx}(s)\|_{L^2(\RM)\cap L^\infty(\RM)}
\lesssim  (1+s)^{-1}
$$
to bound
$$
\begin{aligned}
\|\int_0^t \partial_x s^{\rm p}(t-s) f^{\rm p} \ks^2\psi_x(t)^2 ds
- &\int_0^t \sigma (t-s) \partial_x (e(\ks\psi_x)^2(s))ds\|_{L^p(\RM)}\\
\lesssim
&\int_0^t (1+t-s)^{\frac12(1-1/p)}(t-s)^{-\frac12}\|\partial_x
(\psi_x^2(s))\|_{L^1(\RM)\cap L^\infty(\RM)}ds\\
\lesssim
&\int_0^t (1+t-s)^{\frac12(1-1/p)}(t-s)^{-\frac12} (1+s)^{-1}ds.
\end{aligned}
$$

Thus, subtracting \eqref{temp} from \eqref{repeat}, and defining $\delta:=\ks\psi_x-k$, we have
\be\label{comp}
\delta(t)=
\int_0^t \sigma (t-s) \partial_x (e \delta(s) (k(s)+\ks
\psi_x (s)))ds +\tilde r(t).
\ee
Defining, for some $\eta>0$,
$$
\nu(t):=\sup_{p\in[2,\infty]}\sup_{0\le s\le t}\|\delta(s)\|_{L^p(\RM)}
(1+s)^{\frac{1}{2}(1-1/p)+\frac12 - \eta},
$$
we thus obtain by the standard heat bounds
$\|\sigma(t)\partial_x f\|_{L^p(\RM)}\le Ct^{-\frac12(1/q-1/p)-\frac12}\|f\|_{L^q(\RM)}$ when $1\le q\le p\le\infty$, that, for $2\le p\le\infty$,
$$
\begin{aligned}
\|\delta(t)\|_{L^p(\RM)}&\lesssim
\int_0^{t/2} (t-s)^{-\frac12(1-1/p)-\frac12}\|
e \delta(s) (k(s)+\ks \psi_x (s))\|_{L^1(\RM)} ds \\
&\quad +
\int_{t/2}^t (t-s)^{-\frac12(1/2-1/p)-\frac12}\|
e \delta(s) (k(s)+\ks \psi_x (s))\|_{L^2(\RM)} ds +\|\tilde r(t)\|_{L^p(\RM)}\\
&\lesssim
\int_0^{t/2} (t-s)^{-\frac12(1-1/p)-\frac12}
 \nu(t)E_0(1+s)^{-1+\eta}
ds\\
&\quad + \int_{t/2}^t (t-s)^{-\frac12(1/2-1/p)-\frac12}
 \nu(t)E_0(1+s)^{-\frac54+\eta}
ds\\
&\quad  + E_0\ln(2+t)\ (1+t)^{-\frac{1}{2}(1-1/p)-\frac{1}{2}}\\
&\lesssim
E_0(\nu(t)+1)(1+t)^{-\frac12(1-1/p)-\frac12+\eta}
,
\end{aligned}
$$
giving
$$
\nu(t)\le C_\eta E_0\left(1+\nu(t)\right),
$$
whence, if $E_0<1/(2C_\eta)$, we have (noting that $\nu$ by standard theory remains bounded) that $\nu(t)\le 2C_\eta E_0$. This gives
$\|k(t)-\ks\partial_x \psi(t)\|_{L^p(\RM)}=\|\delta(t)\|_{L^p(\RM)}
\le 2C_\eta (1+t)^{-\frac{1}{2}(1-1/p) -\frac12+\eta }$,
completing the result for $k-\ks\psi_x$. A similar computation yields the result for $h-\psi$.
\end{proof}

\medskip

{\bf Acknowledgement.} Thanks to Bj\"orn Sandstede for
pointing out the results of \cite{SSSU}.
Thanks also to the anonymous referees for several helpful suggestions
that greatly improved the exposition.

\appendix
\section{Asymptotic equivalence of scalar equations}\label{cl_proof}

Lemma \ref{cl_lemma} follows by a simplified (but also somewhat modified;
this does not immediately follow from the results stated in \cite{Ka,LZ})
version of the arguments used in \cite{Ka,LZ} to prove corresponding
but weaker versions in the system case.
We include a proof here, both for completeness, and to
motivate the more complicated comparison arguments appearing in the main body
of the paper.

\begin{proof}[Proof of Lemma \ref{cl_lemma}]
By the general, system, results of \cite{Ka},\footnote{
See \cite[Remark 4.2]{Ka} improving the result of \cite[Theorem 4.2]{Ka}
in the strictly parabolic case.
Bounds \eqref{kawbds} are proved by estimates similar to those
of this paper and \cite{JNRZ1}; see for example \eqref{simest} just below.}
provided $E_0:=\|k_0\|_{L^1(\RM)\cap H^3(\RM)}$ is sufficiently small, we have
for $\tilde k:=\kappa-\ks$ and $1\le p\le\infty$,
the ``heat-type'' bounds
\ba\label{kawbds}
\|k(t)\|_{L^p(\RM)},\,
\|\tilde k(t)\|_{L^p(\RM)} &\lesssim E_0 (1+t)^{-\frac{1}{2}(1-\frac{1}{p})},
\quad
\|k_x(t)\|_{H^1(\RM)},\,
\|\tilde k_x(t)\|_{H^1(\RM)} &\lesssim E_0 (1+t)^{-\frac34}.
\ea

Setting $\delta:=\tilde \kappa -\kappa$, we have, subtracting and rearranging,
$$
\delta_t - a \delta_x- d\delta_{xx}= \partial_x \CalF,\qquad
\CalF=O((|k|+|\tilde k|)\delta)
+O(\tilde k^3)+O(\tilde k \tilde k_x),
$$
with $ \delta|_{t=0}=0$, where
$a=-dq(\ks)$,
$d=d(\ks)$ are constant.
By Duhamel's formula,
$$
\delta (t)= \int_0^t \sigma(t-s)\d_x\CalF(s) ds,
$$
where $\sigma$ is the solution operator of the
convected heat equation $u_t -au_x-du_{xx}=0$.
Applying the standard heat bounds
$\|\sigma(t)\partial_x^r f\|_{L^p(\RM)}\lesssim
t^{-\frac{1}{2}(\frac1q-\frac{1}{p})-\frac{r}{2}}\|f\|_{L^q(\RM)}$,
$1\le q\le p\le \infty$,
together with
$$
\|\CalF(t)\|_{L^q(\RM)}\ \lesssim\
E_0 (1+t)^{-\frac12(1-1/q)-\frac14}(\|\delta(t)\|_{L^2(\RM)}
+\|\tilde k_x(t)\|_{L^2(\RM)}) +E_0^2
(1+t)^{-\frac12(1-1/q)-1},
$$
$1\le q\le 2,$
we find, defining $\nu(t):=
\sup_{0\le s\le t}\|\delta(s)\|_{L^2(\RM)}(1+s)^{\frac34-\eta}$,
that, for all $1\le p\le \infty$,
\be\label{simest}
\begin{aligned}
\|\delta(t)\|_{L^p(\RM)}&\lesssim
\int_0^{t/2} (t-s)^{-\frac12(1-1/p)-\frac12}\| \CalF (s)\|_{L^1(\RM)} ds\\
&\quad +
\int_{t/2}^t (t-s)^{-\frac12(1/(\min(2,p))-1/p)-\frac12}
\| \CalF (s)\|_{L^{\min(2,p)}(\RM)} ds \\
&\lesssim
\int_0^{t/2} (t-s)^{-\frac12(1-1/p)-\frac12} (\nu(t)E_0+E_0^2)(1+s)^{-1+\eta}ds\\
&+\int_{t/2}^t (t-s)^{-\frac12(1/(\min(2,p))-1/p)-\frac12}(\nu(t)E_0+E_0^2)(1+s)^{-1+\eta-\frac12(1-1/(\min(2,p)))}ds\\
&\lesssim
E_0(E_0 +\nu(t)) (1+t)^{-\frac12(1-1/p)-\frac12 +\eta},
\end{aligned}
\ee
whence $ \nu(t)\le C_\eta E_0\left(E_0+\nu(t)\right) $.
This implies that
$\nu(t)\le 2C_\eta E_0^2$ for $E_0<1/(2C_\eta)$, giving
$$
\|\delta(t)\|_{L^p(\RM)}\leq 2C_\eta E_0^2
(1+t)^{-\frac12(1-1/p)-\frac12 +\eta} ,
\qquad 1\le p\le \infty.
$$

Finally, let
$\phi(x,t)=\frac{1}{\sqrt{t}}\bar\phi\left(\frac{x+at}{\sqrt{t}}\right)$
define a self-similar solution of Burgers equation
\eqref{mainwhitham} such that $\int \bar \phi=\int k_0$. Defining
$\tilde \delta(t):= k(t)-\phi(t+1)$,
we have by the $L^1$ and $L^1$ first moment assumptions that, for $1\leq p\leq\infty$,
$$
\|\sigma(t)\tilde\delta(0)\|_{L^p(\RM)}\lesssim E_1 (1+t)^{-\frac{1}{2}(1-1/p)-\frac{1}{2}}.
$$
Expressing by Duhamel's formula
$
\tilde \delta (t)=\sigma(t)\tilde \delta(0)+ \int_0^t \sigma(t-s)\d_x\tilde\CalF(s) ds,
$
where
$\tilde\CalF(t)=O((|k(t)|+|\phi(t+1)|)\tilde\delta(t))$,
and, for any $\eta'>0$,
estimating as before, we obtain by a contraction argument
similar to the above that
$
\|\tilde\delta(t)\|_{L^p(\RM)}\ \lesssim\
E_1\ (1+t)^{-\frac12(1-1/p)-\frac12+\eta'},\qquad1\le p\le \infty,
$
provided $E_1$ is small enough (depending on $\eta'$). Indeed one may prove for
$$
\tilde\nu(t):=\sup_{p\in[1,\infty]}\sup_{0\le s\le t}\|\tilde\delta(s)\|_{L^p(\RM)}(1+s)^{\frac12(1-1/p)+\frac12-\eta'}
$$
that $\tilde\nu(t)\leq C'_{\eta'}E_1(1+\tilde\nu(t))$ and conclude
that  $\tilde\nu(t)\le 2C'_{\eta'}E_1$ if $C'_{\eta'}E_1<1/2$.
\end{proof}

\br\label{lastrmk}
\textup{
The order $O(\d_x(k\delta)) \sim O(\d_x(kk_x))\sim O(\d_x(k^3))$ of neglected terms
in the argument above is consistent with the order neglected throughout the
paper. Indeed, as emphasized by Remark \ref{altconv}, the true local wave number is $\ks\tilde \Psi_x$ (where $\tilde \Psi(\cdot,t)$ is the inverse of $y\mapsto X(y,t):=y-\psi(y,t)$) which differs from $\ks(1+\psi_x)$ by an
$O(\|\psi_x\|^2)$ term since
$$
\tilde\Psi_x(x,t)-[1+\psi_x(x,t)]=\psi_x(\tilde\Psi(x,t),t)-\psi_x(\tilde\Psi(x,t)-\psi(\tilde\Psi(x,t),t),t)\tilde\Psi_x(x,t)(1-\psi_x(\tilde\Psi(x,t),t)).
$$
With a slight bit of additional effort, one can check that $\ks\tilde \Psi_x$ satisfies the Whitham equation or its quadratic approximant only up to a truncation of the order $\partial_x^2 (\psi_x^2)\sim \partial_x (k k_x)$. Once integrated, this means that the formal Whitham equation \eqref{whitham} derived for
$\ks(1+\psi_x)$ can be expected to hold for $\ks\tilde\Psi_x$ only to the
(asymptotically equivalent) quadratic order considered here.
In particular, if we were to derive higher order expansions for the equation
satisfied by $\tilde\Psi_x$, as we could in principle do, {\it these
would not in general agree with the Whitham equation \eqref{whitham}}. In any case, while equation \eqref{mainwhitham}, most easily obtained by switching to a comoving reference frame $x\to x-at$, is designed to match, about a given wave, exactly the order of description we attain here, equation \eqref{whitham} is just a convenient way to piece together the two first orders of a nonlinear
WKB expansion; thus \eqref{whitham} could in principle be meaningful even when dealing with a modulated background wave (instead of a true wave) but should not improve in accuracy its quadratic approximant about a given wave. For further discussion of this and related issues, see \cite{DSSS,NR1,NR2}.
}
\er


\begin{thebibliography}{BJNRZ1}

\bibitem[BJNRZ]{BJNRZ} B.~Barker, M. Johnson,
P. Noble, M. Rodrigues, and K.~Zumbrun,
{\it Stability of periodic Kuramoto--Sivashinsky waves},
Appl. Math. Letters 25 (2012), no. 5: 824-829.


\bibitem[DSSS]{DSSS}
A. Doelman, B. Sandstede, A. Scheel, and G. Schneider,
{\it The dynamics of modulated wave trains,}
 Mem. Amer. Math. Soc.  199  (2009),  no. 934, viii+105 pp. ISBN: 978-0-8218-4293-5.

\bibitem
[He]{He} D. Henry,
{\it Geometric theory of semilinear parabolic equations},
Lecture Notes in Mathematics, Springer--Verlag, Berlin (1981).

\bibitem[HoZ]{HoZ}
D. Hoff and K. Zumbrun,
{\it Asymptotic behavior of multi-dimensional scalar viscous shock fronts,}
Indiana Univ. Math. J. 49 (2000) 427--474.

\bibitem[HK]{HK}
L.N. Howard and N. Kopell,
{\it Slowly varying waves and shock structures in reaction-diffusion equations,}
Studies in Appl. Math. 56 (1976/77), no. 2, 95--145.

\bibitem
[JNRZ1]{JNRZ1} M. Johnson, P. Noble, L.M. Rodrigues,  and K. Zumbrun,
{\it Nonlocal modulation of periodic reaction diffusion waves:
nonlinear stability,} to appear, Arch. Ration. Mech. Anal. 

\bibitem
[JNRZ2]{JNRZ2} M. Johnson, P. Noble, L.M. Rodrigues,  and K. Zumbrun,
{\it Behavior of periodic solutions of viscous conservation
laws under localized and nonlocalized perturbations,}
 in preparation.

\bibitem
[JZ1]{JZ1} M. Johnson and K. Zumbrun,
{\it Nonlinear stability of spatially-periodic traveling-wave solutions of
systems of reaction diffusion equations,}
Annales de l'Institut Henri Poincar\'e - Analyse non lin\'eaire, 28 (2011), issue 4, 471-483.

\bibitem
[JZ2]{JZ2} M. Johnson and K. Zumbrun,
{\it Nonlinear stability of periodic traveling waves
of viscous conservation laws in the generic case},
J. Diff. Eq. 249 (2010), no. 5, 1213-1240.

\bibitem
[JZN]{JZN} M. Johnson, K. Zumbrun, and P. Noble,
{\it Nonlinear stability of viscous roll waves,}
SIAM J. Math. Anal. 43 (2011), no. 2: 557-611.

\bibitem
[J]{J} S. Jung,
{\it Pointwise asymptotic behavior of nonlocalized modulated periodic reaction-diffusion waves},
to appear, J. Diff. Eq.

\bibitem
[Ka]{Ka} S. Kawashima,
{\it Large-time behaviour of solutions to hyperbolic-parabolic systems of conservation laws and applications,}
Proc. Roy. Soc. Edinburgh Sect. A 106 (1987), no. 1-2, 169--194.

\bibitem
[LZ]{LZ} T.-P. Liu, Y. Zeng,
{\it Large time behavior of solutions for general quasilinear hyperbolic-parabolic systems of conservation laws,}
Mem. Amer. Math. Soc.  125  (1997),  no. 599, viii+120 pp.


\bibitem[NR1]{NR1} P. Noble, and M. Rodrigues,
{\it Whitham's equations for modulated roll-waves in shallow flows,}
unpublished manuscript (2010), arXiv:1011.2296.

\bibitem[NR2]{NR2} P. Noble, and M. Rodrigues,
{\it Whitham's modulation equations and stability of periodic wave solutions of the generalized Kuramoto-Sivashinsky equations,}
to appear, Indiana Univ. Math. J..

\bibitem[Pr]{Pr} J. Pr\"uss,
\textit{On the spectrum of $C^{0}$-semigroups.}
Trans. Amer. Math. Soc. 284 (1984), no. 2, 847--857.

\bibitem[SSSU]{SSSU}
B. Sandstede, A. Scheel, G. Schneider, and H. Uecker, 
{\it Diffusive mixing of periodic wave trains in reaction-diffusion systems,}
J. Diff. Eq. 252 (2012), no. 5, 3541--3574. 

\bibitem
[S1]{S1} G. Schneider, {\it Nonlinear diffusive stability
of spatially periodic solutions-- abstract theorem and higher space
dimensions},
Proceedings of the International Conference on Asymptotics
in Nonlinear Diffusive Systems (Sendai, 1997),  159--167,
Tohoku Math. Publ., 8, Tohoku Univ., Sendai, 1998.

\bibitem
[S2]{S2} G. Schneider,
{\it Diffusive stability of spatial periodic solutions of the
Swift-Hohenberg equation,} (English. English summary)
Comm. Math. Phys. 178 (1996), no. 3, 679--702.


\bibitem
[Se]{Se} D. Serre,
{\it Spectral stability of periodic solutions of viscous conservation laws:
Large wavelength analysis}, Comm. Partial Differential Equations 30 (2005),
no. 1-3, 259--282.


\bibitem [W]{W}
G.B. Whitham,
{\it Linear and nonlinear waves,}
Pure and Applied Mathematics. Wiley-Interscience [John Wiley \& Sons], New York-London-Sydney,  1974. xvi+636 pp.

\bibitem [Z1]{Z1} K. Zumbrun,
{\it Refined Wave--tracking and Nonlinear
Stability of Viscous Lax Shocks}. Methods Appl. Anal.  7 (2000) 747--768.

\end{thebibliography}
\end{document}